\renewcommand\nomgroup[1]{%
  \item[\bfseries
  \ifstrequal{#1}{V}{Variables}{%
  \ifstrequal{#1}{P}{Parameters}{%
  \ifstrequal{#1}{S}{Sets}{}}}%
]}
\newcommand{\cmark}{\ding{51}}
\newcommand{\xmark}{\ding{55}}
\def\BibTeX{{\rm B\kern-.05em{\sc i\kern-.025em b}\kern-.08em
    T\kern-.1667em\lower.7ex\hbox{E}\kern-.125emX}}
\def\eqref#1{equation~\ref{#1}}
\def\1{\bm{1}}
\DeclareMathAlphabet{\mathsfit}{\encodingdefault}{\sfdefault}{m}{sl}
\SetMathAlphabet{\mathsfit}{bold}{\encodingdefault}{\sfdefault}{bx}{n}
\newcommand{\R}{\mathbb{R}}
\newcommand{\C}{\mathbb{C}}
\DeclareMathOperator*{\argmax}{arg\,max}
\newcommand{\st}{\mathrm{s.t.}}
\newtheorem{theorem}{Theorem}
\newtheorem{lemma}[theorem]{Lemma}
\newtheorem{corollary}[theorem]{Corollary}
\newtheorem{remark}{Remark}
\newtheorem{assumption}{Assumption}
\newtheorem{definition}{Definition}
\definecolor{tumb}{RGB}{0,101,189}
\newcommand{\revise}[1]{{\color{black}#1}}
\colorlet{color1}{blue}
\colorlet{color2}{red!50!black}
\begin{document}

\title{Distributionally Robust Joint Chance-Constrained Optimal Power Flow using Relative Entropy}

\author{Eli Brock*,~Haixiang Zhang*,~Javad Lavaei,~\IEEEmembership{Fellow,~IEEE},~and Somayeh Sojoudi,~\IEEEmembership{Senior Member,~IEEE}
\thanks{\textsuperscript{*}Equal Contribution.}
\thanks{Eli Brock and Somayeh Sojoudi are with the Department of EECS, UC Berkeley (e-mail: eli.brock@bekeley.edu, sojoudi@berkeley.edu). Haixiang Zhang is with the Department of Mathematics, UC Berkeley (e-mail: haixiang\_zhang@bekeley.edu). Javad Lavaei is with the Department of IEOR, UC Berkeley (e-mail: lavaei@berkeley.edu). This work was supported in part by grants from NSF, Noyce Initiative, ONR, AFOSR, and ARO.}
}



\maketitle

\begin{abstract}
Designing robust algorithms for the optimal power flow (OPF) problem is critical for the control of large-scale power systems under uncertainty. The chance-constrained OPF (CCOPF) problem provides a natural formulation of the trade-off between the operating cost and the constraint satisfaction rate. In this work, we propose a new data-driven algorithm for the CCOPF problem, based on distributionally robust optimization (DRO). \revise{We show that the proposed reformulation of the distributionally robust chance constraints is exact, whereas other approaches in the CCOPF literature rely on conservative approximations. We establish out-of-sample robustness guarantees for the distributionally robust solution and prove that the solution is the most efficient among all approaches enjoying the same guarantees.}  We apply the proposed algorithm to the CCOPF problem and compare the performance of our approach with existing methods using simulations on IEEE benchmark power systems.
\end{abstract}

\begin{IEEEkeywords}
Distributionally robust optimization, optimal power flow, chance constraint.
\end{IEEEkeywords}

\vspace{-1em}
\section{Introduction}
\label{sec:intro}

Developing resilient algorithms for the optimal power flow (OPF) problem is fundamental to efficient and reliable decision-making in large-scale energy systems. The OPF problem consists of minimizing some objective, including but not limited to generation costs, subject to the physics of the power network as well as additional constraints on power quality, safety, and reliability. Independent system operators solve OPF at several timescales, from hours to minutes ahead of the dispatch time, in order to manage the market and match supply to demand. Traditionally, the primary source of uncertainty in optimal power flow was stochastic loads. This uncertainty was handled through forecasts which were accurate enough that mismatches between supply and demand could be handled in real-time without a significant deviation from nominal network and market conditions. However, given the growing penetration of variable renewable energy (VRE), more sophisticated methods will be necessary to ensure that decisions can be made as efficiently as possible while being robust to large forecast errors.

\revise{Random power injections from VRE forecast error affect the power flow of the network, which appears in the constraints of the OPF problem. To deal with the randomness in the constraints, a suitable definition of constraint satisfaction is required to define the feasible set of the problem. For example,}
the robust optimization (RO) approach was proposed in \cite{jabr_robust_2015} and \cite{louca_robust_2019} to find the worst-case solution, namely, the optimal decision that satisfies all constraints for all possible realizations of the randomness in the system. The RO approach produces the most conservative solution and results in a high operational cost. 

Chance-constrained optimal power flow (CCOPF) allows for a small user-specified probability of violating the constraints in the OPF solution in exchange for a much better operational cost (small violations will later be handled via a real-time control mechanism) Chance-constrained methods avoid the conservativeness associated with the RO approach, which ensures an operating point that is feasible for all possible realizations of a system's forecast errors. Refer to \cite{roald_chance-constrained_2018} and \cite{venzke_convex_2018} for popular formulations of CCOPF.

A challenge for CCOPF is that the true underlying distribution of the random parameters is generally unknown and must be inferred from historical data. \revise{Conventional data-based reformulations of chance constraints include the sample average approximation \cite{pagnoncelli_sample_2009} and the scenario approach \cite{campi_scenario_2009}. Given an allowable violation probability and a tolerance parameter, these approaches lower bound the number of samples required to achieve a given degree of confidence in the probability of satisfying the chance constraints.} The sample average approximation is easily applicable but may lead to a high-variance estimate of the true distribution. The scenario approach is applied for CCOPF in \cite{roald_chance-constrained_2018} and \cite{venzke_convex_2018}. However, the scenario approach is sample-intensive, may be overly conservative, and is often computationally complex. Additionally, more sample-efficient methods allow for samples over longer time horizons (i.e., a day instead of an hour) to be aggregated into a single realization of a random vector, which could reduce bias if forecast errors follow temporal patterns.

Distributionally robust optimization (DRO) addresses the issue of unknown true data-generation distributions by enforcing the chance constraints for all distributions in an \textit{ambiguity set} centered, in the sense of some characteristic feature of probability distributions, around the empirical distribution \cite{rahimian_frameworks_2022}. The idea is that, given enough samples, the true distribution is highly likely to fall inside the ambiguity set. Several papers have applied DRO techniques to OPF or related problems in energy systems. The authors of \cite{lubin_robust_2016,zhang_distributionally_2016,wei_distributionally_2016,xie_distributionally_2018} employ \textit{moment-based} ambiguity sets containing probability distributions with the first and second moments close to those of the empirical distribution. Li \textit{et al.} \cite{li_distributionally_2019} add a unimodality assumption to the moment-based sets to reduce the conservatism. Moment-based ambiguity sets often yield exact tractable reformulations of the chance-constrained program, but they lose information about the true distribution revealed through other features of the data. \textit{Metric-based} ambiguity sets, by contrast, are constructed using measures of distance between probability distributions, most often the Wasserstein metric, and are more expressive. The metric-based approach has the advantage that various statistical consistency and convergence guarantees can be established for DRO estimators \cite{mohajerin_esfahani_data-driven_2018,van_parys_data_2021}. To reformulate the chance constraints as tractable constraints, inner approximations of Wasserstein metric-based ambiguity sets, such as hyper-cubes \cite{duan_distributionally_2018} and polytopes \cite{zhou_linear_2020}, have previously been studied. However, these inner approximations are overly conservative in practice and lead to pessimistic estimations.

\revise{The DRO approaches discussed in the last paragraph are designed for \textit{disjoint} chance constraints,} in which each constraint individually must be satisfied with a given probability. The chance constraints in CCOPF are formulated disjointly for each two-sided constraint \cite{zhou_linear_2020,xie_distributionally_2018,duan_distributionally_2018} or separately for each upper and lower bound \cite{lubin_robust_2016,li_distributionally_2019,zhang_distributionally_2016}.
\textit{Joint} chance constraints, by contrast, require that a solution be feasible, that is, satisfies \textit{all} constraints simultaneously, with a given probability. Given the same violation probability, joint chance constraints are clearly stronger than disjoint chance constraints. \revise{In addition, the joint CCOPF problem is less studied in literature compared to the disjoint counterpart.} Joint chance constraints can be guaranteed by applying the \revise{Bonferroni approximation} to appropriately scaled disjoint chance constraints; see \cite{baker_joint_2019}. However, this approach is highly conservative and does not exploit the potential correlation between random variables in different constraints. Intuitively, when the randomness between constraints is highly correlated, joint chance constraints can be satisfied at a cost that is only slightly higher than that of the chance constraint of a single stochastic constraint. Yang \textit{et al.} \cite{yang_tractable_2022} build on the Bonferroni approach and achieve an inner approximation of a moment-based ambiguity set for the joint case.

The particularly interesting line of work \cite{guo_data-based_2019,guo_data-based_2019-1,poolla_wasserstein_2021,arab_distributionally_2022, ordoudis_energy_2021} is inspired by \cite{mohajerin_esfahani_data-driven_2018}, which provides a reformulation of Wasserstein metric-based DRO problems using conditional value-at-risk (CVaR). The two-part work \cite{guo_data-based_2019}-\cite{guo_data-based_2019-1} is the first to apply the CVaR reformulation to OPF by penalizing constraint violations in the objective function; however, this is not a chance-constrained approach and cannot guarantee the satisfaction of the constraints in any well-defined sense. Ordoudis \textit{et al} \revise{\cite{ordoudis_energy_2021} propose a (nonconvex) CVAR-based inner approximation of the Wasserstein metric-based ambiguity set and proposes an iterative algorithm to solve it.} Poolla \textit{et al.} \cite{poolla_wasserstein_2021} approximate the joint chance constraints using the Bonferroni approximation and reformulate them using CVaR. To achieve the reformulation, the authors use an inner approximation of the ambiguity set via a hyper-rectangle in the parameter space. Arab \textit{et al.} \cite{arab_distributionally_2022} improve on \cite{poolla_wasserstein_2021} by using an ellipsoidal approximation, which reduces the conservativeness by exploiting the correlation between random variables. While the ellipsoidal approximation improves on the hyper-rectangle approximation, the method in \cite{arab_distributionally_2022} remains overly conservative as a consequence of mismatch between the inner approximation and the ambiguity set; see Section \ref{sec:numerical} for numerical illustrations. To address the above issues, we build upon the conference paper \cite{brock_distributionally_2023} tailored to a class of non-convex problems using DRO to study the CCOPF problem. Compared to \cite{brock_distributionally_2023}, we develop strong theoretical results in the context of power systems, and we numerically illustrate the empirical performance of our approach on benchmark IEEE power systems. \revise{Compared with \cite{arab_distributionally_2022,ordoudis_energy_2021}, our approach does not rely on the prior assumption that the ambiguity set can be well approximated by an ellipsoidal or a CVAR approximation. Furthermore, we establish a high-probability guarantee on the constraint satisfaction rate under the \textit{true} distribution and prove that our solution achieves a lower generation cost than any other method with the same guarantee.}

Inspired by \cite{van_parys_data_2021}, we use a relative entropy-based ambiguity set in our DRO formulation and establish stronger theoretical guarantees than those in existing literature.
We implement the algorithms on benchmark OPF problem instances, showcasing the advantages of our new formulation. We summarize our contributions in the following:
%
\begin{itemize}
    \item Instead of the commonly used Wasserstein metric, our DRO formulation utilizes a relative entropy-based ambiguity set. We prove that the relative entropy-based formulation comes with robustness guarantees on out-of-sample performance and moreover admits the \textit{least conservative} DRO solution in the sense that the solution achieves the minimum possible generation cost among all methods with the same robustness guarantees.
    \item \revise{We provide an \textit{exact} reformulation of \textit{joint} distributionally robust chance constraints over the ambiguity set. By comparison, existing works construct an approximation set of the ambiguity set and/or only consider disjoint chance constraints, which makes it challenging to control the trade-off between the efficiency and robustness of the solution.} Furthermore, our reformulation always leads to a feasible problem, while existing approaches cannot guarantee the feasibility.
    \item We empirically compare the performance of our DRO approach with the state-of-the-art approaches in \revise{\cite{arab_distributionally_2022,yang_tractable_2022,ordoudis_energy_2021} on the IEEE 14-bus and 300-bus test cases. We show that our approach is able to find competitive and efficient solutions that asymptotically satisfy the joint chance constraints, while the approximation algorithms in the literature can lead to overly conservative solutions.}
\end{itemize}
\revise{We note that our exact reformulation is designed for the joint chance constraint and does not include the relaxations of OPF models (e.g., the semi-definite relaxation of AC OPF model). The exact reformulation of chance constraints and relaxations of OPF models are discussed in Sections \ref{sec:dro} and \ref{sec:cc-opf}, respectively.
In \cite{van_parys_data_2021}, the authors established the optimality guarantee of a relative entropy-based ambiguity set in the context of minimizing the expected value of an objective function with deterministic constraints. We have adopted the same definition of the ambiguity set and extended the optimality guarantee to the chance constraint case; see Theorems \ref{thm:drpres-1} and \ref{thm:drpres-2} for the theoretical guarantees.  We have modified the theory in \cite{van_parys_data_2021} and established similar optimality guarantees for the chance-constrained setting. 
}

Table \ref{tab:lit_review} summarizes the relevant existing literature on DRO for power systems (most, but not all, of the listed papers focus on OPF) and illustrates our contributions.
\begin{table}[t]
    \centering\caption{\label{tab:lit_review}Comparison of relevant chance-constrained OPF literature}
    \scriptsize
    \begin{tabular}{c|c|c|c|c}
          & chance-constrained & joint & metric-based & exact reformulation \\
          \cite{lubin_robust_2016} & \cmark & \xmark & \xmark & \cmark \\
          \cite{zhou_linear_2020} & \cmark & \xmark & \cmark & \xmark \\
          \cite{xie_distributionally_2018} & \cmark & \xmark & \xmark & \cmark \\
          \cite{duan_distributionally_2018} & \cmark & \xmark & \cmark & \xmark \\
          \cite{zhang_distributionally_2016} & \cmark & \xmark & \xmark & \cmark \\
          \cite{li_distributionally_2019} & \cmark & \xmark & \xmark & \cmark \\
          \cite{yang_tractable_2022} & \cmark & \cmark & \xmark & \xmark \\
          \cite{guo_data-based_2019,guo_data-based_2019-1} & \xmark & & \cmark & \cmark \\
          \cite{ordoudis_energy_2021} & \cmark & \cmark & \cmark & \xmark \\
          \cite{poolla_wasserstein_2021} & \cmark & \cmark & \cmark & \xmark \\
          \cite{arab_distributionally_2022} & \cmark & \cmark & \cmark & \xmark \\
          this work & \cmark & \cmark & \cmark & \cmark
    \end{tabular}
\end{table}
It is worth mentioning that all works in Table \ref{tab:lit_review} except \cite{arab_distributionally_2022} use the common linearized DC approximation of the nonlinear power flow equations, though this approximation is not always coupled to the specific handling of chance constraints. In comparison, we allow for the full AC OPF problem in this work.


\revise{The remainder of the paper is organized as follows. In Section \ref{sec:dro}, we develop a new exact reformulation of general joint chance-constrained problems. In Section \ref{sec:cc-opf}, we introduce chance-constrained optimal power flow and present different models and techniques to arrive at a problem compatible with the general formulation from Section \ref{sec:dro}. Finally, in Section \ref{sec:numerical}, we implement the proposed algorithm to verify the theory and demonstrate the strong empirical performance compared with existing algorithms. We conclude the paper in Section \ref{sec:conclusion}.
}

{\bf Notation:} For every positive integer $n$, we define $[n] \coloneqq \{1,\dots,n\}$. The set of $n$-dimensional integer, real and complex vectors are denoted as $\mathbb{Z}^n$, $\R^n$ and $\C^n$, respectively. Similarly, we use $\R^{m\times n}$ and $\C^{m\times n}$ to denote the set of $m$-by-$n$ real and complex matrices, respectively. 
Let $\mathbf{1}_n$ and $\mathbf{0}_n$ be the vectors with all elements equal to $1$ and $0$, respectively. 
Denote $e_k$ as the $k$-th standard basis vector of $\mathbb{R}^n$. For any two matrices $X,Y\in\R^{m\times n}$, the inner product between them is defined as $\langle X, Y\rangle \coloneqq \mathrm{Tr}(X^TY)$, where $\mathrm{Tr}$ stands for the trace. For each vector $\mathbf{v}\in\R^n$, we say $\mathbf{v} \leq \mathbf{0}_n$ if $\mathbf{v}_k\leq 0$ for all $k\in[n]$. Let $\|\cdot\|$ be the $2$-norm of vectors. 
We say $f(S) = o(S)$ if $\lim_{S\rightarrow \infty} f(S) / S = 0$.

\section{Distributionally Robust Optimization Approach}
\label{sec:dro}


\revise{In this section, we use DRO techniques to develop exact reformulations of chance-constrained optimization problems. We show in the following sections that the proposed reformulation approach can be applied to deal with both AC and DC CCOPF problems; see Section \ref{sec:cc-opf} for more details.}
To preserve the generality of our results, we consider the general objective function and constraint function
\begin{align*}
    g(\mathbf{X}):\R^d \mapsto \R,\quad h(\mathbf{X},\xi):\R^d \times \R^n  \mapsto \R^m,
\end{align*}
where random vector $\xi\in\R^n$ obeys the distribution $\mathbb{P}_0$, and integers $d$ and $m$ are the size of input variable $\mathbf{X}$ and the number of constraints, respectively. In this subsection, we consider the optimization problem with stochastic constraints:
\begin{align}
\label{eqn:obj-general}
    {\min}_{\mathbf{X}\in\R^{d}}~g(\mathbf{X})\quad \mathrm{s.t.}~h(\mathbf{X},\xi) \leq \mathbf{0}_m.
\end{align}
Note that our theory can be extended to the case when randomness $\xi$ also occurs in the objective function $g$ or the feasible set is a convex subset of $\R^d$. We focus on the simpler problem (\ref{eqn:obj-general}) as our objective is to solve the CCOPF problem (\ref{eq:cc-opf}). We make the following assumption:
\begin{assumption}
\label{asp:dro}
The support of $\mathbb{P}_0$ belongs to a compact set $\Xi \subset \R^n$. 
Both functions $g(\cdot)$ and $h(\cdot,\cdot)$ are continuous.
In addition, for every positive integer $S$ and all realizations $\xi_1,\dots,\xi_S\in\R^n$, problem
\[  {\min}_{\mathbf{X}\in\R^{d}}~g(\mathbf{X})\quad \mathrm{s.t.}~h(\mathbf{X},\xi_j)\leq \mathbf{0}_m,~\forall j\in[S] \]
is feasible and has a finite optimal value.
\end{assumption}
%
\noindent Assumption \ref{asp:dro} requires that the constraints can be satisfied regardless of the forecast error $\xi$. This is generally satisfied by real-world OPF problem instances, which have sufficient reserve and transmission capacity to handle renewable forecast errors when properly operated; see the discussion in Section \ref{sec:cc-opf} for more details.

To deal with the stochastic constraint in problem (\ref{eqn:obj-general}), we target finding the minimum-cost solution under the \textit{joint chance constraint}
\begin{align}
\label{eqn:cc-joint-general}
    \mathbb{P}_0\left[ h(\mathbf{X},\xi) \leq \mathbf{0}_m \right] \geq 1 - \epsilon,
\end{align}
where $\epsilon\in(0,1]$ is the pre-specified maximum failing probability.
\begin{remark}
More generally, our results can be extended to the case when the joint constraints are defined by a convex cone
\begin{align}
\label{eqn:cc-joint-cone-0}
    \mathbb{P}_0\left[ \omega^T h(\mathbf{X},\xi) \leq 0,\quad\forall \omega \in \mathcal{W} \right] \geq 1 - \epsilon,
\end{align}
where $\mathcal{W}$ is the convex cone spanned by weight vectors\footnote{A vector $\omega\in\R^m$ is called a weight vector if $\omega \geq \mathbf{0}_m$ and $\mathbf{1}_m^T\omega=1$.} $\omega_1,\dots,\omega_L$. Constraint (\ref{eqn:cc-joint-cone-0}) reduces to the cardinal case (\ref{eqn:cc-joint-general}) when $L=m$ and $\omega_\ell = \mathbf{e}_\ell$ for all $\ell\in[m]$.
\end{remark}

Suppose, as is the case for VRE generation, that the true distribution $\mathbb{P}_0$ is unknown and only limited historical samples may be available. 
Suppose that there are $S$ independently and identically distributed samples, $\xi^1,\dots,\xi^S$, generated from the distribution $\mathbb{P}_0$. We define the empirical distribution of $\xi$ as
\[ \hat{\mathbb{P}}_S := \frac1S {\sum}_{k\in[S]} \delta_{\xi^k}, \]
where $\delta_{\xi}$ is the Dirac measure at $\xi$. 
The goal of the DRO approach is to use the information from the empirical distribution $\hat{\mathbb{P}}_S$ to find robust solutions that satisfy the chance constraint (\ref{eqn:cc-joint-general}) with high probability. Define the ambiguity set
\begin{align*}
    \mathcal{D}_r\left( \mathbb{P} \right) \coloneqq \left\{ \mathbb{P}'\in\mathcal{P} ~|~ I\left(\mathbb{P}, \mathbb{P}' \right) \leq r\right\},\quad \forall \mathbb{P} \in \mathcal{P},
\end{align*}
where $I(\cdot,\cdot)$ is the relative entropy \cite{cover_elements_1999}, $r > 0$ is the radius and $\mathcal{P}$ is the family of Borel distributions with support in $\Xi$. 
The robustness of the DRO solutions is guaranteed by the satisfaction of chance constraints under all distributions in the ambiguity set $\mathcal{D}_r(\hat{\mathbb{P}}_S )$. Other distributional metrics, such as the Wasserstein metric, are also considered in CCOPF literature \cite{poolla_wasserstein_2021,arab_distributionally_2022}. In this work, however, we use the relative entropy due to the strong optimality guarantees it can provide; see Theorems \ref{thm:drpres-1}-\ref{thm:drpres-2} and \cite{cover_elements_1999,van_parys_data_2021}. Intuitively, large deviation theory guarantees that the relative entropy between the true data-generation distribution and the empirical distribution can be bounded by a value that depends on the sample size \cite{cover_elements_1999}. Hence, the true distribution is contained in the ambiguity set with high probability. Critically, the relative entropy-based ambiguity set is the ``smallest'' ambiguity set with such a property \cite{van_parys_data_2021}. 

\revise{\begin{remark}\label{rem:kl_assym}
In DRO literature \cite{calafiore_ambiguous_2007,hong_kullback-leibler_2012,lam_robust_2016,jiang_data-driven_2016}, a similar ambiguity set, defined as
\begin{align*}
    \tilde{\mathcal{D}}_r\left( \mathbb{P} \right) \coloneqq \left\{ \mathbb{P}'\in\mathcal{P} ~|~ I\left(\mathbb{P}', \mathbb{P} \right) \leq r\right\},\quad \forall \mathbb{P} \in \mathcal{P},
\end{align*}
has been widely studied. Unlike $\mathcal{D}_r(\mathbb{P})$, the fixed distribution $\mathbb{P}$ appears in the second argument of $I$. The two ambiguity sets are different because the relative entropy is asymmetric. By placing the perturbed distribution $\mathbb{P}'$ in the first argument, the alternative ambiguity set only includes distributions that are absolutely continuous with respect to $\mathbb{P}$, which may not include $\mathbb{P}_0$ when $\mathbb{P}=\hat{\mathbb{P}}_S$ and therefore restricts the robustness guarantees that can be established. For this reason, we depart from the traditional relative entropy-based DRO literature and study $\mathcal{D}_r$ instead of $\tilde{\mathcal{D}}_r$. See Remark 3 in \cite{van_parys_data_2021} for a more detailed discussion.
\end{remark}}
 we lack knowledge of the true distribution $\mathbb{P}_0$, we approximate the chance constraint (\ref{eqn:cc-joint-general}) with the \textit{distributionally robust chance constraint}
\begin{equation} \label{eqn:dr-cc}
    \inf_{\mathbb{P}'\in\mathcal{D}_r(\hat{\mathbb{P}}_S)} \mathbb{P}'\left[h(\mathbf{X},\xi)\leq\mathbf{0}_m\right] \geq 1-\epsilon.
\end{equation}
\revise{
Intuitively, the true distribution $\mathbb{P}$ belongs to $\mathcal{D}_r(\hat{\mathbb{P}}_S)$ with a high probability in terms of $\epsilon$ and $S$. Hence, the chance constraint (\ref{eqn:cc-joint-general}) holds with high probability if the distributionally robust chance constraint (\ref{eqn:dr-cc}) is satisfied. We prove the claim rigorously in the following theorem.
%
\begin{theorem}
\label{thm:drpres-1}
For all $\epsilon\in(0,1]$ and $r > 0$, if $\mathbf{X}\in\mathbb{R}^d$ satisfies constraint (\ref{eqn:dr-cc}), then it holds that
\begin{equation}
\label{eqn:drpres-bound-1}
    \mathbb{P}_\infty\Big[ h\left(\mathbf{X}, \xi\right) \leq \mathbf{0}_m\Big]\geq 1 - \epsilon - \exp\left[ -rS + o(S) \right],
\end{equation}
where $\mathbb{P}_\infty$ is the probability measure of the sample path space of $\xi$ under distribution $\mathbb{P}_0$.
\end{theorem}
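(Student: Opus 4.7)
The plan is to decompose the sample-path probability $\mathbb{P}_\infty$ according to whether the true law $\mathbb{P}_0$ belongs to the empirical ambiguity set $\mathcal{D}_r(\hat{\mathbb{P}}_S)$, and to control the bad event with a Sanov-type large-deviation estimate. I interpret $\mathbb{P}_\infty$ as the joint law of the training samples $(\xi^1,\dots,\xi^S)$ together with an independent test draw $\tilde{\xi}\sim\mathbb{P}_0$, so that the symbol $\xi$ on the left-hand side of (\ref{eqn:drpres-bound-1}) is identified with $\tilde{\xi}$. Introduce the event
\[
    E := \bigl\{\mathbb{P}_0\in\mathcal{D}_r(\hat{\mathbb{P}}_S)\bigr\} = \bigl\{I(\hat{\mathbb{P}}_S,\mathbb{P}_0)\leq r\bigr\}.
\]

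The key observation is that on $E$ the true distribution is an admissible choice of $\mathbb{P}'$ in the DR constraint (\ref{eqn:dr-cc}), which immediately yields $\mathbb{P}_0\bigl[h(\mathbf{X},\tilde{\xi})\leq\mathbf{0}_m \mid \hat{\mathbb{P}}_S\bigr]\geq 1-\epsilon$ (the conditioning only serves to fix the sample-dependent value of $\mathbf{X}$, since $\tilde{\xi}$ is independent of the training samples). Using the tower property and the trivial lower bound $\mathbf{1}_{E^c}\cdot(\,\cdot\,)\geq 0$,
\[
    \mathbb{P}_\infty\bigl[h(\mathbf{X},\tilde{\xi})\leq\mathbf{0}_m\bigr] \geq \mathbb{E}_\infty\bigl[\mathbb{P}_0[h(\mathbf{X},\tilde{\xi})\leq\mathbf{0}_m\mid\hat{\mathbb{P}}_S]\cdot\mathbf{1}_E\bigr] \geq (1-\epsilon)\,\mathbb{P}_\infty[E].
\]
It then suffices to bound $\mathbb{P}_\infty[E^c]=\mathbb{P}_\infty[I(\hat{\mathbb{P}}_S,\mathbb{P}_0)>r]$. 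Since $\Xi$ is compact by Assumption \ref{asp:dro} and $Q\mapsto I(Q,\mathbb{P}_0)$ is lower semicontinuous in the weak topology, the sublevel complement $\{Q:I(Q,\mathbb{P}_0)\geq r\}$ is weakly closed, and Sanov's theorem for probability measures on a compact Polish space yields
\[
    \limsup_{S\to\infty}\,\frac{1}{S}\log\mathbb{P}_\infty\bigl[I(\hat{\mathbb{P}}_S,\mathbb{P}_0)>r\bigr] \leq -r,
\]
that is, $\mathbb{P}_\infty[E^c]\leq \exp[-rS+o(S)]$. Chaining the two displays using $(1-\epsilon)(1-\delta)\geq 1-\epsilon-\delta$ produces (\ref{eqn:drpres-bound-1}).

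The main obstacle is making the Sanov step rigorous in the continuous-support setting. The finite-alphabet form is textbook (e.g., \cite{cover_elements_1999}), but the extension to Borel probability measures on a compact Polish space requires a version of Sanov's upper bound in the weak topology together with the standard lower-semicontinuity and non-negativity properties of the relative entropy functional; this is well known but worth an explicit citation. A secondary subtlety is the reading of the hypothesis ``$\mathbf{X}$ satisfies (\ref{eqn:dr-cc})'': the argument only uses that the DR constraint holds on every realization of $\hat{\mathbb{P}}_S$, so the conclusion applies uniformly to the optimal DR solution and to any sample-dependent rule that returns a feasible decision almost surely, and no additional measurability or continuity hypothesis on the map $\hat{\mathbb{P}}_S\mapsto\mathbf{X}$ is needed beyond what is required to condition on $\hat{\mathbb{P}}_S$.
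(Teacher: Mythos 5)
There is a genuine gap, and it sits exactly at the step your plan identifies as the ``key observation.'' Your decomposition hinges on the event $E=\{\mathbb{P}_0\in\mathcal{D}_r(\hat{\mathbb{P}}_S)\}=\{I(\hat{\mathbb{P}}_S,\mathbb{P}_0)\leq r\}$ having probability close to one. But with the paper's convention for the relative entropy (see Remark \ref{rem:kl_assym}: $I(\mathbb{P},\mathbb{P}')<\infty$ requires $\mathbb{P}\ll\mathbb{P}'$), membership of $\mathbb{P}_0$ in $\mathcal{D}_r(\hat{\mathbb{P}}_S)$ requires the \emph{atomic} empirical measure to be absolutely continuous with respect to $\mathbb{P}_0$. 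Whenever $\mathbb{P}_0$ is atomless --- which is precisely the continuous-support regime the theorem with the $o(S)$ term is written for, and which the paper emphasizes is the relevant one for VRE forecast errors --- we have $I(\hat{\mathbb{P}}_S,\mathbb{P}_0)=+\infty$ almost surely, so $\mathbb{P}_\infty[E]=0$ and your chain of inequalities yields only the vacuous bound $\mathbb{P}_\infty[h(\mathbf{X},\xi)\leq\mathbf{0}_m]\geq 0$. Correspondingly, your Sanov step is false as stated: $\mathbb{P}_\infty[I(\hat{\mathbb{P}}_S,\mathbb{P}_0)>r]$ equals $1$ for every $S$ in this regime, so its normalized log cannot be $\leq -r$. (A secondary slip points the same way: lower semicontinuity of $Q\mapsto I(Q,\mathbb{P}_0)$ makes the \emph{sublevel} sets weakly closed and the superlevel set $\{I(\cdot,\mathbb{P}_0)>r\}$ weakly \emph{open}; its weak closure contains $\mathbb{P}_0$ itself, so the Sanov upper bound over the closure gives $0$, not $-r$.) Your argument is correct in the finite-support case, where $\hat{\mathbb{P}}_S\ll\mathbb{P}_0$ holds almost surely and the method-of-types bound recovers (\ref{eqn:drpres-bound-finite}); but that is the easy special case the paper states separately.

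The paper's proof deliberately avoids the ``true distribution lies in the ambiguity set'' route. It works at the level of the worst-case quantile predictor $\hat{q}_{1-\epsilon,r,\hat{\mathbb{P}}_S}$ and invokes Theorem 10 of \cite{van_parys_data_2021}, which bounds the probability of the \emph{out-of-sample disappointment} event $\{\hat{q}_{1-\epsilon,r,\hat{\mathbb{P}}_S}(\mathbf{X})<q_{1-\epsilon}[\bar{h}(\mathbf{X},\cdot),\mathbb{P}_0]\}$ at exponential rate $r$. This is a strictly weaker requirement than $\mathbb{P}_0\in\mathcal{D}_r(\hat{\mathbb{P}}_S)$: the worst case over the ambiguity set can dominate the true quantile even though $\mathbb{P}_0$ itself is excluded from the set, and establishing this is where the real large-deviations work (and the care with semicontinuity in the weak topology) lives. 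To repair your proof you would need to replace the event $E$ by the disappointment event for the quantile functional and prove the LDP upper bound for that event directly, which is essentially reconstructing the cited result rather than bypassing it.
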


In the regime when the support $\Xi$ is a finite set, we can apply the strong large deviation principle \cite{van_parys_data_2021} and derive the following finite-sample bound in the same way as Theorem \ref{thm:drpres-1}:
\begin{align}
\label{eqn:drpres-bound-finite}
    \mathbb{P}_\infty\Big[ h\left(\mathbf{X}, \xi\right)  \leq \mathbf{0}_m \Big]\geq 1 - \epsilon - (S + 1)^d e^{-rS}.
\end{align}

We include the finite-sample bound (\ref{eqn:drpres-bound-finite}) for theoretical completeness and the forecast error for VRE can have a continuous support. In particular, the high-probability bound (\ref{eqn:drpres-bound-1}) is satisfied by the solution to the distributionally robust chance-constrained problem
\begin{align}
\label{eqn:drpres}
    \hat{\mathbf{X}}_{\epsilon, r,\hat{\mathbb{P}}_S} := \arg\min_{\mathbf{X}\in\R^d}~ &g(\mathbf{X})\quad\st~ \text{chance constraint }(\ref{eqn:dr-cc}).
\end{align}
%
In the following theorem, we show that $\hat{\mathbf{X}}_{\epsilon, r,\hat{\mathbb{P}}_S}$ achieves the minimum cost among all inputs that satisfy the high-probability bound (\ref{eqn:drpres-bound-1}) and are constructed from the empirical distribution $\hat{\mathbb{P}}_S$.
\begin{theorem}
\label{thm:drpres-2}
Suppose that $\tilde{\mathbf{X}}_{\epsilon, r, \mathbb{P}} \in \R^d$ is a quasi-continuous function of distribution $\mathbb{P}$ and that $\tilde{\mathbf{X}}_{\epsilon, r, \hat{\mathbb{P}}_S}$ satisfies constraint (\ref{eqn:drpres-bound-1}). Then, we have 
\[ \mathbb{P}_\infty\left[ g\left(\tilde{\mathbf{X}}_{\epsilon, r, \hat{\mathbb{P}}_S}\right) < g \left(\hat{\mathbf{X}}_{\epsilon, r,\hat{\mathbb{P}}_S} \right) \right] = 0. \]
%
\end{theorem}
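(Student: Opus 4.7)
The plan is to argue by contradiction. Suppose that $\mathbb{P}_\infty[g(\tilde{\mathbf{X}}_{\epsilon,r,\hat{\mathbb{P}}_S}) < g(\hat{\mathbf{X}}_{\epsilon,r,\hat{\mathbb{P}}_S})] > 0$; the strategy is to exhibit a hypothetical data-generating distribution $\mathbb{Q}_\star$ under which $\tilde{\mathbf{X}}$ violates the out-of-sample guarantee (\ref{eqn:drpres-bound-1}), contradicting the standing hypothesis that $\tilde{\mathbf{X}}$ satisfies this bound. The key ingredients are the optimality of $\hat{\mathbf{X}}_{\epsilon,r,\hat{\mathbb{P}}_S}$ for problem (\ref{eqn:drpres}), the large deviation principle (Sanov's theorem) for the empirical measure, and the quasi-continuity assumption.

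First I would observe that, because $\hat{\mathbf{X}}_{\epsilon,r,\mathbb{Q}}$ is the cost-minimizer of (\ref{eqn:drpres}) at $\mathbb{Q}$, any $\tilde{\mathbf{X}}_{\epsilon,r,\mathbb{Q}}$ with strictly smaller cost must be infeasible for (\ref{eqn:dr-cc}) at $\mathbb{Q}$. Equivalently, there exists a witness $\mathbb{Q}_\star(\mathbb{Q})\in\mathcal{D}_r(\mathbb{Q})$ with $\mathbb{Q}_\star(\mathbb{Q})[h(\tilde{\mathbf{X}}_{\epsilon,r,\mathbb{Q}},\xi)>\mathbf{0}_m]>\epsilon$. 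Let $\mathcal{A}$ denote the set of distributions at which this occurs. By the Sanov upper bound applied under $\mathbb{P}_0$, if $\mathbb{P}_\infty[\hat{\mathbb{P}}_S\in\mathcal{A}]$ does not decay exponentially to $0$, then $\inf_{\mathbb{Q}\in\overline{\mathcal{A}}}I(\mathbb{Q},\mathbb{P}_0)=0$, so $\mathbb{P}_0\in\overline{\mathcal{A}}$. Using the quasi-continuity of $\tilde{\mathbf{X}}_{\epsilon,r,\cdot}$, the continuity of $g$ and $h$ from Assumption \ref{asp:dro}, and compactness of $\Xi$, I would then construct a weakly-open neighborhood $\mathcal{U}$ of some $\mathbb{Q}_0\in\mathcal{A}$ together with a single witness $\mathbb{Q}_\star$ satisfying $\mathbb{Q}_\star\in\mathcal{D}_r(\mathbb{Q})$ and $\mathbb{Q}_\star[h(\tilde{\mathbf{X}}_{\epsilon,r,\mathbb{Q}},\xi)>\mathbf{0}_m]>\epsilon+\delta$ uniformly over $\mathbb{Q}\in\mathcal{U}$, for some $\delta>0$.

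Finally, I would regard $\mathbb{Q}_\star$ as a hypothetical true data-generating distribution. The Sanov lower bound then gives
\[ \liminf_{S\to\infty}\tfrac{1}{S}\log\mathbb{P}_{\mathbb{Q}_\star,\infty}[\hat{\mathbb{P}}_S\in\mathcal{U}]\ \geq\ -\inf_{\mathbb{Q}\in\mathcal{U}}I(\mathbb{Q},\mathbb{Q}_\star)\ \geq\ -r, \]
because $\mathbb{Q}_0\in\mathcal{U}$ and $I(\mathbb{Q}_0,\mathbb{Q}_\star)\leq r$. Conditioning on $\{\hat{\mathbb{P}}_S\in\mathcal{U}\}$ and invoking the uniform infeasibility from the previous paragraph shows that the out-of-sample violation rate under $\mathbb{Q}_\star$ exceeds $\epsilon+\exp(-rS+o(S))$. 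This contradicts (\ref{eqn:drpres-bound-1}) when the true distribution is $\mathbb{Q}_\star$; since Theorem \ref{thm:drpres-1} is symmetric in the choice of $\mathbb{P}_0$, the assumed out-of-sample guarantee on $\tilde{\mathbf{X}}$ must hold for every admissible true distribution, and so this contradiction closes the argument.

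The principal obstacle will be the localization step: showing, via quasi-continuity, that the witness distribution $\mathbb{Q}_\star$ can be chosen uniformly on an open neighborhood of $\mathbb{Q}_0$ with a strict margin $\delta>0$, rather than merely pointwise. A related subtlety is matching the $o(S)$ exponents between the Sanov lower bound on $\mathcal{U}$ and the out-of-sample upper bound (\ref{eqn:drpres-bound-1}), so that a genuine asymptotic contradiction emerges. Handling these points cleanly requires the weak LDP on weakly-open sets of probability measures together with tightness arguments that follow from compactness of $\Xi$ in Assumption \ref{asp:dro}.
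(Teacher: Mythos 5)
Your overall strategy---re-deriving the Pareto-optimality of the relative-entropy predictor from first principles via a change of measure and Sanov's theorem---is genuinely different from the paper's proof, which instead perturbs the distributionally robust predictor to $\hat{q}_{1-\epsilon,r,\mathbb{P}}(\mathbf{X})-\mathrm{d}(\mathbf{X},\mathbb{P})$ on a quasi-open set where $\tilde{\mathbf{X}}$ is strictly cheaper, verifies that the perturbed predictor remains feasible, and then invokes the strong-dominance result (Theorem 10) of \cite{van_parys_data_2021} as a black box. Your route is essentially the proof idea \emph{behind} that cited theorem, so it is the right spirit; but the final contradiction step fails as written. The hypothesis (\ref{eqn:drpres-bound-1}) bounds the \emph{unconditional} violation probability,
\begin{equation*}
\mathbb{E}_{\mathrm{samples}}\Bigl[\,\mathbb{Q}_\star\bigl(\,\bar{h}(\tilde{\mathbf{X}}_{\epsilon,r,\hat{\mathbb{P}}_S},\xi)>0 \,\big|\, \hat{\mathbb{P}}_S\bigr)\Bigr] \;\leq\; \epsilon + \exp\left[-rS+o(S)\right],
\end{equation*}
whereas your construction only shows that the \emph{conditional} violation probability exceeds $\epsilon+\delta$ on the event $\{\hat{\mathbb{P}}_S\in\mathcal{U}\}$, whose $\mathbb{Q}_\star$-probability is of order $e^{-rS+o(S)}$ by the Sanov lower bound. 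That event contributes only $(\epsilon+\delta)e^{-rS+o(S)}\to 0$ to the unconditional violation probability, which does not contradict the displayed bound. To close the argument you must work with the disappointment-\emph{rate} condition of \cite{van_parys_data_2021} --- that the sample-space event $\{q_{1-\epsilon}[\bar{h}(\tilde{\mathbf{X}}_{\epsilon,r,\hat{\mathbb{P}}_S},\cdot),\mathbb{Q}_\star]>0\}$ has probability decaying at exponential rate at least $r$ --- since $\{\hat{\mathbb{P}}_S\in\mathcal{U}\}$ sits inside that event and its Sanov rate $\inf_{\mathcal{U}}I(\cdot,\mathbb{Q}_\star)\leq I(\mathbb{Q}_0,\mathbb{Q}_\star)\leq r$ then yields the contradiction (after handling the boundary case of equality). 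This is exactly the form of guarantee the paper's proof silently substitutes for (\ref{eqn:drpres-bound-1}).

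Two further points. First, you justify applying the guarantee under the alternative distribution $\mathbb{Q}_\star$ by appealing to a ``symmetry'' of Theorem \ref{thm:drpres-1}, but that theorem concerns $\hat{\mathbf{X}}$, not $\tilde{\mathbf{X}}$; the hypothesis on $\tilde{\mathbf{X}}$ must itself be interpreted as holding for every candidate true distribution (as the paper implicitly does when it checks feasibility for problem (5) of \cite{van_parys_data_2021}), and this should be assumed rather than derived. Second, the localization step you flag as the principal obstacle is precisely where quasi-continuity must be used; the paper's device --- showing that the bad set of pairs $(\tilde{\mathbf{X}}_{\epsilon,r,\hat{\mathbb{P}}_S},\hat{\mathbb{P}}_S)$ with a margin $\tau>0$ is quasi-open and hence has non-empty interior --- is the concrete mechanism you would need to spell out to obtain your uniform witness with margin $\delta$.
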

}
\revise{
Combined, Theorems \ref{thm:drpres-1} and \ref{thm:drpres-2} establish out-of-sample performance guarantees for the relative entropy-based distributionally robust approximation to the chance constraint (\ref{eqn:cc-joint-general}), and they show that it is optimal among all solutions that satisfy these guarantees.

Although its solution enjoys the attractive properties established above, the DRO problem (\ref{eqn:drpres}) is not in a form that can be handled directly by existing solvers. In Lemma \ref{lem:drpred}, we derive an equivalent form of the chance constraint (\ref{eqn:dr-cc}), which will facilitate a reformulation of (\ref{eqn:drpres}) as a practically solvable problem. First, we define the scalar-valued maximum over the constraints for a given $\mathbf{X}$ and realization of $\xi$
\begin{equation*}
    \bar{h}(\mathbf{X},\xi) := {\max}_{\ell\in[m]}h_\ell(\mathbf{X},\xi)
\end{equation*}
and the worst-case constraint value
\begin{equation*}
    h^*(\mathbf{X}) := {\max}_{\xi\in\Xi}\bar{h}(\mathbf{X},\xi).
\end{equation*}
}
\revise{
\begin{lemma}\label{lem:drpred}
For all $\epsilon \in (0,1]$ and $r > 0$, there exists a $k(\epsilon,r,S)\in[S+1]$ such that (\ref{eqn:dr-cc}) is satisfied if and only if
\begin{equation*}
    {\min}^{k(\epsilon,r,S)}\left\{\bar{h}(\mathbf{X},\xi^j)~|~j\in[S]\right\}\cup\{\bar{h}^*(\mathbf{X)}\} \leq 0,
\end{equation*}
where $\min^k T$ indicates the $k$-th smallest value in set $T$. In addition, the integer $k(\epsilon,r,S)$ is the solution to
\begin{align*}
    \max_{k\in[S], \mathbf{p}\in\R^{S+1}}~ &k\\
    \st~ &{\sum}_{j\in[k]} \mathbf{p}_j \leq 1 - \epsilon,~ \mathbf{1}_{S+1}^T \mathbf{p} = 1,~ \mathbf{p} \geq \mathbf{0}_{S + 1},\\
    &-{\sum}_{j\in[S]} \log(S \mathbf{p}_j) \leq S r.
\end{align*}
\end{lemma}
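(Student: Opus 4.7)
The plan is to characterize the worst-case distribution in $\mathcal{D}_r(\hat{\mathbb{P}}_S)$ and reduce the distributionally robust chance constraint (\ref{eqn:dr-cc}) to a combinatorial condition on the sorted sample values. First, I would dispose of the trivial case $h^*(\mathbf{X}) \leq 0$: then $\bar{h}(\mathbf{X}, \xi) \leq 0$ on all of $\Xi$, so (\ref{eqn:dr-cc}) holds automatically and every element of $\{\bar{h}(\mathbf{X}, \xi^j)\}_{j\in[S]} \cup \{h^*(\mathbf{X})\}$ is nonpositive, making the equivalence trivial for any $k$. In the nontrivial case $h^*(\mathbf{X}) > 0$, I would parametrize any $\mathbb{P}' \in \mathcal{D}_r(\hat{\mathbb{P}}_S)$ by the sample masses $p_j := \mathbb{P}'(\{\xi^j\}) > 0$ (finiteness of $I(\hat{\mathbb{P}}_S, \mathbb{P}')$ forces strict positivity) and a residual $q = 1 - \sum_j p_j$. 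Since $\hat{\mathbb{P}}_S$ is uniform on the samples, the relative entropy collapses to $-\tfrac{1}{S}\sum_j \log(Sp_j)$, and since $\{\bar{h}(\mathbf{X}, \cdot) > 0\}$ is a nonempty open subset of the compact $\Xi$, the residual $q$ can be placed entirely in the violation region, making the worst-case violation probability exactly $1 - \sum_{j \in J^c} p_j$, where $J^c := \{j \in [S] : \bar{h}(\mathbf{X}, \xi^j) \leq 0\}$.

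The DRO constraint therefore collapses to the requirement that $\sum_{j \in J^c} p_j \geq 1 - \epsilon$ for every $\mathbf{p}$ with $p_j > 0$, $\sum_j p_j \leq 1$, and $-\sum_j \log(Sp_j) \leq Sr$. By convexity of the constraint set and invariance under permutations within $J$ and within $J^c$, the minimizer of $\sum_{j \in J^c} p_j$ can be taken symmetric: $p_j = \alpha$ for $j \in J$, $p_j = \beta$ for $j \in J^c$, with $|J|\alpha + |J^c|\beta = 1$ at the optimum. Substituting, the log-entropy inequality reduces to a two-point Bernoulli KL condition whose solution yields a worst-case satisfaction that depends on the samples only through the cardinality $|J^c|$ and is monotone increasing in $|J^c|$. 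Hence there is a threshold $k(\epsilon, r, S) \in [S+1]$ such that (\ref{eqn:dr-cc}) holds if and only if $|J^c| \geq k(\epsilon, r, S)$; equivalently, since $h^*(\mathbf{X}) > 0$ sits at the top of the sorted $(S+1)$-element list in this case, if and only if the $k(\epsilon, r, S)$-th smallest of these values is $\leq 0$.

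To identify this threshold with the optimization in the statement, I would interpret $\mathbf{p} \in \R^{S+1}$ with $p_{S+1}$ representing the residual mass placed at a maximizer $\xi^* \in \argmax_{\xi \in \Xi} \bar{h}(\mathbf{X}, \xi)$ and read $\sum_{j \in [k]} \mathbf{p}_j \leq 1 - \epsilon$ as asking whether a candidate size-$k$ ``good'' subset can carry mass at most $1 - \epsilon$ under the entropy budget $Sr$. The symmetric reduction from the previous paragraph shows that the feasibility of the optimization at level $k$ is governed by the same Bernoulli KL inequality controlling the worst-case satisfaction, so the optimal $k$ in the stated problem coincides with the critical threshold. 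The hard part, I expect, will be the careful boundary bookkeeping when translating between ``$|J^c| \geq k$'' and ``$k$-th smallest $\leq 0$'' under the non-strict inequalities present at both the DRO constraint and the optimization's $\leq 1 - \epsilon$ bound; a secondary subtlety is that the supremum over the weak-star compact ambiguity set need not be attained by any single $\mathbb{P}'$, so the equivalence has to be verified through the explicit symmetric minimizer rather than via a pointwise extremal distribution.
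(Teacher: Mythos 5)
Your argument is correct in its essentials, but it takes a genuinely different route from the paper. The paper proves this lemma by working at the level of the worst-case quantile predictor $\hat{q}_{1-\epsilon,r,\hat{\mathbb{P}}_S}$ from \cite{van_parys_data_2021}: it adapts Lemma~2 of that reference to show the supremum over the ambiguity set is attained by a distribution supported on the samples together with the argmax set $\Xi^*(\mathbf{X})$ (the key step being that replacing any singular component by a Dirac at a maximizer of $\bar{h}(\mathbf{X},\cdot)$ only worsens the quantile), and then cites problem~(33) of \cite{van_parys_data_2021} to obtain the optimization problem defining $k(\epsilon,r,S)$. You instead parametrize the ambiguity set directly — correctly observing that finiteness of $I(\hat{\mathbb{P}}_S,\mathbb{P}')$ forces $\hat{\mathbb{P}}_S\ll\mathbb{P}'$, so the entropy constraint collapses to $-\frac1S\sum_j\log(S p_j)\le r$ and leaves the residual mass unconstrained in location — reduce the worst-case satisfaction probability to a finite-dimensional linear objective over a convex permutation-invariant set, and symmetrize to show the optimal value depends only on $|J^c|$ and is monotone in it. This is more elementary and self-contained (it never needs the quantile machinery, and the case split on $h^*(\mathbf{X})$ handles the $(S{+}1)$-th list element cleanly), whereas the paper's route has the advantage of setting up the predictor/prescriptor framework that is reused verbatim in the proofs of Theorems~\ref{thm:drpres-1} and \ref{thm:drpres-2}. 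The boundary bookkeeping you flag (non-strict inequalities at $\sum_{j\in[k]}\mathbf{p}_j = 1-\epsilon$, and whether ``$|J^c|\ge k$'' or ``$|J^c|\ge k+1$'' is the right threshold when the minimum is not exactly $1-\epsilon$) is a real degeneracy, but it is equally present in the paper's quantile-based equivalence~(\ref{eqn:cc-joint-cone}) and is not resolved there either, so it does not count against your approach; just be aware that your monotonicity claim for the two-point reduction, while easy (the minimum of a nonnegative sum over a fixed feasible set can only grow as the index set grows), should be stated explicitly rather than asserted.
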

}

%
%
%
Lemma \ref{lem:drpred} implies that the distributionally robust approximation of the chance constraint (\ref{eqn:dr-cc}) is equivalent to enforcing the $k(\epsilon,r,S)$ least restrictive constraints derived from the samples. 
\revise{\begin{remark}
We note that a similar technique has been utilized in Theorem 8 in \cite{hong_kullback-leibler_2012}, where the threshold $\bar{\beta}$ can be viewed as a counterpart of $1 - k(\epsilon, r, S) / S$. In general, $\bar{\beta}$ takes a different value since they used a different ambiguity set (see Remark \ref{rem:kl_assym}). In addition, the optimization problem for $\bar{\beta}$ (i.e., problem (31) in \cite{hong_kullback-leibler_2012}) is derived from the dual of a functional optimization problem, whose optimal Lagrangian multiplier has a closed form. In comparison, the counterpart of the functional optimization problem in our formulation does not accept a closed-form solution for the multiplier; see problem (29) in \cite{van_parys_data_2021} for the dual problem. Therefore, it is necessary to use our method to compute the value of $k(\epsilon, r, S)$.
\end{remark}}
\revise{
We now present our main result. Lemma \ref{lem:drpred} allows for an exact reformulation that can be applied directly to CCOPF problems:
\begin{corollary}
\label{cor:main}
Problem (\ref{eqn:drpres}) is equivalent to
    \begin{align}
    \label{eqn:drpres-mip}
        \min_{\mathbf{X}\in\R^d, \mathbf{z}\in\mathbb{Z}^S}~ &g(\mathbf{X})\\
        \nonumber\st~ &\text{if }\mathbf{z}_j=0 \text{ then } h(\mathbf{X}, \xi^j) \leq 0, ~\forall j \in [S],\\
        \nonumber&\mathbf{1}_S^T \mathbf{z} \leq S - k(\epsilon,r,S),\quad \mathbf{z}_j \in \{0, 1\},\quad \forall j \in [S].
    \end{align}
\end{corollary}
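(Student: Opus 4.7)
My plan is to derive Corollary \ref{cor:main} as a direct consequence of Lemma \ref{lem:drpred} by recasting the order-statistic condition on the sample values as a cardinality constraint enforced through binary selector variables. Since (\ref{eqn:drpres}) and (\ref{eqn:drpres-mip}) share the same objective $g(\mathbf{X})$ and the same decision variable $\mathbf{X}$, it suffices to show that the projections of their feasible sets onto the $\mathbf{X}$ coordinate coincide.

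First, I would invoke Lemma \ref{lem:drpred} to replace the distributionally robust chance constraint (\ref{eqn:dr-cc}) by the equivalent condition that the $k(\epsilon, r, S)$-th smallest element of the multiset $\{\bar{h}(\mathbf{X}, \xi^j) : j \in [S]\} \cup \{\bar{h}^*(\mathbf{X})\}$ is nonpositive. Since each sample $\xi^j$ lies in $\Xi$, the definition of $\bar{h}^*$ yields $\bar{h}^*(\mathbf{X}) \geq \bar{h}(\mathbf{X}, \xi^j)$ for every $j \in [S]$, so $\bar{h}^*(\mathbf{X})$ is always the largest element of the multiset. For $k := k(\epsilon, r, S) \leq S$, which is the range produced by the optimization defining $k$ in Lemma \ref{lem:drpred}, the $k$-th smallest of the $(S+1)$ values therefore coincides with the $k$-th smallest of the $S$ sample values alone, and the order-statistic condition reduces to the cardinality requirement that at least $k$ of the constraints $h(\mathbf{X}, \xi^j) \leq \mathbf{0}_m$ hold. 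I would then match this requirement against the MIP constraints in both directions: feasibility of $(\mathbf{X}, \mathbf{z})$ in (\ref{eqn:drpres-mip}) forces at least $k$ indices to have $\mathbf{z}_j = 0$, which forces at least $k$ satisfied sample constraints via the indicator implication; conversely, any $\mathbf{X}$ satisfying at least $k$ sample constraints can be paired with the selector $\mathbf{z}$ that equals $1$ precisely on the at most $S - k$ violated indices and $0$ elsewhere, yielding a feasible pair.

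The main obstacle I anticipate is only careful bookkeeping around the role of $\bar{h}^*(\mathbf{X})$ in the multiset. One must verify that the boundary case $\bar{h}^*(\mathbf{X}) \leq 0$, which makes the order-statistic condition trivially hold and is consistent with the trivial selector $\mathbf{z} = \mathbf{0}_S$, is also covered by the count argument above; this is automatic because $\bar{h}^*(\mathbf{X}) \leq 0$ forces zero sample violations, so the induced selector has $\mathbf{1}_S^T \mathbf{z} = 0 \leq S - k$. Apart from this subtlety, the corollary is essentially a syntactic translation of Lemma \ref{lem:drpred}, and no further analytic work beyond that lemma is required.
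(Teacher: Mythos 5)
Your proposal is correct and matches the paper's own argument, which proves the corollary (via the Big-M formulation in Theorem \ref{thm:drpres-0}) by exactly the same translation: Lemma \ref{lem:drpred} reduces constraint (\ref{eqn:dr-cc}) to the $k(\epsilon,r,S)$-th order statistic of the sample values being nonpositive, which is then encoded as the cardinality/indicator constraint on the binary selectors $\mathbf{z}$. Your extra bookkeeping on $\bar{h}^*(\mathbf{X})$ being the largest element of the multiset, so that it drops out when $k(\epsilon,r,S)\le S$, is the same restriction the paper makes explicitly after Lemma \ref{lem:drpred}.
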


Problem (\ref{eqn:drpres-mip}) can be formulated as a mixed-integer program (MIP) by implementing the logical indicator constraint using the big-M method (some solvers and algebraic modeling languages will perform this reformulation automatically). Depending on the structure of $g$ and $h$, problem (\ref{eqn:drpres-mip}) may be a mixed-integer linear program (MILP) or a convex/nonconvex mixed-integer nonlinear program (C-MINLP/NC-MINLP). Off-the-shelf solvers exist for all three problem classes, though NC-MINLP solvers such as Bonmin and Juniper typically use local algorithms as heuristics for the relaxed continuous subproblems and therefore cannot guarantee the global optimality.
}
Compared to existing DRO formulations \cite{poolla_wasserstein_2021,arab_distributionally_2022,ordoudis_energy_2021,yang_tractable_2022}, our formulation provides stronger guarantees in the following two senses. First, the DRO solution $\hat{\mathbf{X}}_{\epsilon, r,\hat{\mathbb{P}}_S}$ achieves the minimum possible cost over all robust solutions that satisfy the joint chance constraint (\ref{eqn:drpres-bound-1}). This optimality property arises from the choice of the relative entropy for the ambiguity set, and such property cannot be established by other distributional metrics, although the Wasserstein metric can provide similar high-probability bounds \cite{mohajerin_esfahani_data-driven_2018}. Second, the mixed-integer reformulation (\ref{eqn:drpres-mip}) is \textit{exact}. In contrast, existing literature on distributionally robust CCOPF considered approximations to the ambiguity set. For metric-based ambiguity sets, these are \textit{inner} approximations that may be overly conservative \cite{ordoudis_energy_2021,arab_distributionally_2022,poolla_wasserstein_2021}; see the comparison results in Section \ref{sec:numerical}. 

In practice, it is preferable for the user to first specify $k$ and then compute the optimal $\epsilon$ and $r$ to maximize the right-hand side of (\ref{eqn:drpres-bound-1}). Given $k\in[S]$ and $\epsilon\in[1 - k/S, 1]$, the maximal radius $r$ such that $k(\epsilon,r,S) = k$ is given by
\[ r = -\frac{k}{S}\log\left(\frac{S(1 - \epsilon)}{k}\right) -\frac{S - k}{S}\log\left(\frac{S \epsilon}{S - k}\right), \]
where we define $0\log0 = 0$. Therefore, when the sample size $S$ is sufficiently large, we ignore the $o(S)$ term on the right-hand side of (\ref{eqn:drpres-bound-1}) and solve the maximization problem
\begin{align}
\label{eqn:optimal-eps}
    \epsilon^*_{k, S} := \argmax_{\epsilon\in[1 - k/S, 1]}~ 1 - \epsilon - \frac{S^S}{k^k (S-k)^{S-k}} (1-\epsilon)^k \epsilon^{S-k},
\end{align}
where we define $0^0 = 1$. The solution to the above problem approximately maximizes the right-hand side of (\ref{eqn:drpres-bound-1}) and can be efficiently found by the bisection algorithm.

\section{Chance-constrained Optimal Power Flow}
\label{sec:cc-opf}

\revise{In this section, we consider the chance-constrained optimal power flow (CCOPF) problem. We present multiple versions of the problem associated with different relaxations and linearizations of the power flow equations. While the complexities of the formulations differ, the DRO techniques developed in Section \ref{sec:dro} can be applied to any of them.}

\renewcommand{\nomname}{}
\subsection{Nomenclature}
\vspace{-2em}
\printnomenclature

\nomenclature[S]{$\mathcal{N}=\{1,\dots,n\}$}{Buses}
\nomenclature[S]{$\mathcal{L}=\mathcal{N}\times\mathcal{N}$}{Lines}
\nomenclature[S]{$\mathcal{PQ},\mathcal{PV},\mathcal{V\theta}$}{Load buses, generator buses, and slack buses}
\nomenclature[V]{$\theta\in\mathbb{C}^n$}{Voltage phase angles}
\nomenclature[V]{$P,Q\in\mathbb{R}^n$}{Active and reactive power injections}
\nomenclature[V]{$v\in\mathbb{R}^n$}{Squared voltage magnitudes}
\nomenclature[V]{$\ell\in\mathbb{N}^{n^2}$}{Active power flow}
\nomenclature[P]{$P^D,Q^D\in\mathbb{R}^n$}{Active and reactive loads}
\nomenclature[P]{$\underline{P}^G,\overline{P}^G\in\mathbb{R}^n$}{Lower and upper active generation limits}
\nomenclature[P]{$\underline{Q}^G,\overline{Q}^G\in\mathbb{R}^n$}{Lower and upper active generation limits}
\nomenclature[P]{$\underline{v},\overline{v}\in\mathbb{R}^n$}{upper and lower squared voltage magnitude limits}
\nomenclature[P]{$\overline{\ell}\in\mathbb{R}^{n^2}$}{Active power line flow limit}
\nomenclature[P]{$c_{kd}\in\mathbb{R}$}{$d$th degree cost coefficient for generator at bus $k$}
\nomenclature[P]{$\Omega\in\mathbb{R}^n:\sum_{k\in\mathcal{N}}\Omega_k=1$}{AGC participation factors}
\nomenclature[P]{$\gamma$}{Ratio of reactive power to real power for VRE output}
\vspace{-1em}
\revise{
\subsection{Formulation}
\label{sec:opf}
The CCOPF problem requires choosing an operating point of a power network that is robust to perturbations caused by the real power forecast error, assumed here to be the result of uncertain VRE generation. Consider a network operating at $(P,Q,v,\theta)$ perturbed by a particular forecast error realization $\xi \in \mathbb{R}^n$. The resultant state, denoted using $\tilde{\cdot}$, satisfies
\begin{subequations} \label{eq:error_response}
    \begin{align}
        \label{eq:active_response} & \tilde{P}_k = P_k + \xi_k - \Omega_k{\sum}_{j\in\mathcal{N}}\xi_j, & \forall k\in\mathcal{PQ}\cup\mathcal{PV}, \\
        \label{eq:reactive_response} & \tilde{Q}_k = Q_k + \gamma\xi_k, & \forall k\in\mathcal{PQ}, \\
        \label{eq:voltage_response} & \tilde{v}_k = v_k, & \forall k\in\mathcal{PV}\cup\mathcal{V\theta}, \\
        \label{eq:angle_response} & \tilde{\theta}_{k} = 0, & \forall k\in\mathcal{V\theta}, \\
        \label{eq:gen_power_flow} & f\left(\tilde{P},\tilde{Q},\tilde{v},\tilde{\theta},\tilde{\ell}\right) = 0,
    \end{align}
\end{subequations}
where $\gamma$ is assumed here to be fixed for all VRE generators. To avoid notational clutter, we suppose that all buses have generators and simply set the upper and lower generation limits and AGC factors to zero at a given bus if no generator exists. Equation (\ref{eq:active_response}) models the active perturbations from the forecast error along with the automatic generator control (AGC), in which each generator is responsible for compensating for a fixed proportion of the forecast error (the change in losses is accounted for by the slack bus). The active generation at the slack bus $\mathcal{V\theta}$ is unconstrained to allow for real loss compensation. Equation (\ref{eq:reactive_response}) models reactive perturbations, whereas generator buses are free to adjust their reactive power to maintain a consistent voltage. Equation (\ref{eq:voltage_response}) keeps the voltage magnitude constant at buses with generators. Equation (\ref{eq:angle_response}) fixes the slack bus voltage angle. \revise{In this work, we choose $\Omega$ to be proportional to the generator capacities, which is a common and reasonable choice for a fixed $\Omega$. Our results can be extended to the case where $\Omega$ is set as a decision variable. We keep $\Omega$ fixed to simplify our experiment setting and focus on illustrating the proposed method.}

\revise{Finally, Equation (\ref{eq:gen_power_flow}) encodes the \textit{power flow model}, given here in polar coordinates, which describes the physics of the network. Multiple power flow models are available, which can be categorized as either nonlinear AC power flow models or linear approximate DC power flow models. Some models employ only some of the physical quantities in the arguments of (\ref{eq:gen_power_flow}), in which case the chance constraints associated with those quantities can be dropped.}

Defining $\textbf{w}:=\begin{bmatrix}P & Q & v & \theta & \ell\end{bmatrix}^\intercal\in\mathbb{R}^{4n+n^2}$ and $\tilde{\textbf{w}}$ accordingly, there exists a unique \textit{implicit function} $u(\textbf{w},\cdot)$ of the forecast error $\xi$ such that $\textbf{w}=u(\textbf{w},\mathbf{0}_n)$ and $\tilde{\textbf{w}}=u(\textbf{w},\xi)$ satisfies Equations (\ref{eq:error_response}) for any $\xi$. This is a consequence of the implicit function theorem.

In context, this means that once the operator chooses a dispatch and assigns participation factors $\Omega$, the network state is determined by the forecast error. Chance constraints are introduced in order to ensure satisfaction of limits on line flows, voltage magnitudes, and generator outputs, resulting in the following joint chance-constrained problem:
\begin{subequations} \label{eq:cc-opf}
    \begin{align}
        \min_{\textbf{w}}~ & c(P) \\
        \label{eq:deterministic} \st~ & f(\textbf{w}) = 0, \\
        \label{eq:cc} & \mathbb{P}_0\left\{\begin{aligned}
            & \underline{P_G} \leq u_P(\textbf{w},\xi) + P^D - \xi \leq \overline{P}_G, \\
            & \underline{Q_G} \leq u_Q(\textbf{w},\xi) + Q^D - \gamma\xi \leq \overline{Q}_G, \\
            & \underline{v} \leq u_v(\textbf{w},\xi) \leq \overline{v}, \\
            & -\overline{\ell} \leq u_\ell(\textbf{w},\xi) \leq \overline{\ell}
        \end{aligned}\right\} \geq 1-\epsilon,
    \end{align}
\end{subequations}
where $\mathbb{P}_0$ is the distribution of perturbation $\xi$ and $u_P$, $u_Q$, $u_v$, and $u_\ell$ are the block components of $u$ associated with the subscripted variables in $w$. The objective is the generator cost:
\begin{equation*}
    c(P)\coloneqq {\sum}_{k\in\mathcal{N}}\Big[c_{k0} + c_{k1} (P_k+P_k^D) + c_{k2} \left(P_k+P_k^D\right)^2 \Big].
\end{equation*}
It will be convenient to express the chance constraint in a compact form. Let $\mathcal{CC}$ denote the set of indices of $w$ associated with the chance-constrained quantities $(P,Q,v,\ell)$. With a suitable choice of $\mathbf{A}\in\mathbb{R}^{(6n+n^2)\times n}$ and $\mathbf{b}\in\mathbb{R}^{6n+n^2}$, the chance constraint (\ref{eq:cc}) becomes
\begin{equation} \label{eq:cc-compact}
    \mathbb{P}_0\left[u_{\mathcal{CC}}(\textbf{w},\xi) + \textbf{A}\xi \leq \textbf{b}\right] \geq 1-\epsilon.
\end{equation}
In our formulation of the CCOPF problem (\ref{eq:cc-opf}), Assumption \ref{asp:dro} is satisfied unless, for instance, the reserve capacity of conventional generators is insufficient to compensate for some realizations of the forecast error. In this case, CCOPF may be infeasible; however, it is reasonable to assume that realistic power networks have sufficient capacity to handle any realization of the forecast error if properly dispatched.

\subsection{Approximate linear uncertainty response}
\label{sec:approx_cc}
\revise{In the case of AC models, we cannot yet apply Corollary \ref{cor:main} because problem (\ref{eq:cc-opf}) is a semi-infinite program (SIP) in general due to the lack of an explicit expression for $u(w,\xi)$. The existing literature \cite{roald_chance-constrained_2018} and \cite{venzke_convex_2018} discussed this issue and proposed approximations of the chance constraints. In this work, we adopt the popular linear approximation from \cite{roald_chance-constrained_2018}, although our method can also be applied to the relaxation from \cite{venzke_convex_2018}.}

Following \cite{roald_chance-constrained_2018}, we note that the forecast errors $\xi$ are typically small relative to the loads $P^D$. Hence, we replace $u(\textbf{w},\xi)$ by its first-order approximation at $\xi=\mathbf{0}_n$. More specifically, we define the Jacobian matrix
\[ \textbf{J}_{\textbf{w}} := \frac{\partial}{\partial\xi}u_{\mathcal{CC}}(\textbf{w},\mathbf{0}_n),\quad \forall \mathbf{w}\in\mathbb{R}^{4n+n^2}. \]
Then, the constraint (\ref{eq:cc-compact}) can be approximated by
\begin{equation} \label{eq:cc-approx}
    \mathbb{P}_0\left[\textbf{w}_{\mathcal{CC}} + \left(\textbf{J}_{\textbf{w}}+\textbf{A}\right)\xi \leq \textbf{b}\right] \geq 1-\epsilon.
\end{equation}
The implicit function theorem provides a closed-form expression for the Jacobian $\textbf{J}_{\textbf{w}}$; we provide it here for completness. Write (\eqref{eq:error_response}) in compact form as
\begin{equation*}
    \Psi(\mathbf{w}, \xi) = 0.
\end{equation*}
Then
\begin{equation}\label{eq:ift}
    \textbf{J}_{\textbf{w}} = -\left[\frac{\partial}{\partial \mathbf{w}}\Psi(\mathbf{w},\mathbf{0}_n)\right]^{-1}\frac{\partial}{\partial\xi}\Psi(\mathbf{w},\mathbf{0}_n)
\end{equation}

Replacing the constraint (\ref{eq:cc-compact}) with (\ref{eq:cc-approx}) makes problem (\ref{eq:cc-opf}) a chance-constrained program with a finite number of constraints. Applying the proposed distributionally robust reformulation of the chance constraints, we obtain the mixed-integer nonlinear program
\begin{align} \label{eq:cc-opf-minlp}
        \min_{\textbf{w}}~ & c(P) \\
        \nonumber\st~ & f(\textbf{w}) = 0, \\
        \nonumber& \text{if }\mathbf{z}_j = 0,\text{ then } \textbf{w}_{\mathcal{CC}} + \left(\textbf{J}_{\mathbf{w}}+\textbf{A}\right)\xi^j \leq \textbf{b},\quad \forall j\in[S], \\
        \nonumber& \mathbf{1}_S^T \mathbf{z} \leq S - k(\epsilon,r,S),\quad \mathbf{z}_j \in \{0, 1\},\quad \forall j \in [S].
\end{align}
Unlike the SIP (\ref{eq:cc-opf}), problem (\ref{eq:cc-opf-minlp}) is a MINLP in general and can be handled by off-the-shelf solvers. If $f$ encodes a linear power flow model such as DC-OPF, then the first-order approximation of the system response is exact and problem (\ref{eq:cc-opf-minlp}) is a MILP. On the other hand, if $f$ encodes an AC power flow model, then existing solvers are heuristics and cannot guarantee optimality.

\subsection{DC power flow formulation}
\label{sec:dc-ccopf}

First, we apply the techniques from Sections \ref{sec:opf} and \ref{sec:approx_cc} to the approximate linear DC power flow model, which is given in the context of (\ref{eq:error_response}) by
\begin{equation*}
    f(P,\ell) := \begin{bmatrix}
        \sum_{k\in\mathcal{N}}P_k \\
        \ell - \Phi P
    \end{bmatrix}.
\end{equation*}
This is the power transfer distribution factor (PTDF)-based DC-OPF formulation, where $\Phi$ is the PTDF matrix and $\ell$ is taken to represent (directed) real power flow. Since the model is linear, the system response functions can be computed in closed form as
\begin{align*}
    u_P(P,\ell,\xi) &= P + (I-\Omega\mathbf{1}_n^\intercal)\xi, \\
    u_\ell(P,\ell,\xi) &= \ell + \Phi(I-\Omega\mathbf{1}_n^\intercal)\xi.
\end{align*}
Corollary \ref{cor:main} can now be applied directly.

\subsection{AC power flow formulation}
\label{sec:ac-ccopf}

We use the quadratic form of the AC power flow equations. Defining $\mathbf{Y}_k$, $\bar{\mathbf{Y}}_k$, , $\mathbf{Y}_{jk}$, and $\mathbf{M}_k$ as in \cite{lavaei_zero_2012}, define
\begin{align*}
    \hat{f}(P,Q,v,\ell,\textbf{X}) := \begin{bmatrix}
        P_k - \textbf{X}^T\mathbf{Y}_k\textbf{X}, & \forall k\in\mathcal{N} \\
        Q_k - \textbf{X}^T\bar{\mathbf{Y}}_k\textbf{X}, & \forall k\in\mathcal{N} \\
        \ell_{jk} - \textbf{X}^T\mathbf{Y}_{jk}\textbf{X}, & \forall (j,k)\in\mathcal{L} \\
        v_k - \textbf{X}^\intercal\mathbf{M}_k\textbf{X}, & \forall k\in\mathcal{N}
    \end{bmatrix},
\end{align*}
Make the change-of-variables 
\begin{equation} \label{eq:cov}
    \textbf{X}:=\begin{bmatrix}
    \sqrt{v}\odot\cos{\theta}\\ \sqrt{v}\odot\sin\theta
\end{bmatrix}.
\end{equation} 
Then, the AC power flow model can be written as 
\begin{equation*}
    f(\textbf{w}) = \hat{f}\left(P,Q,v,\ell,\textbf{X}\right) = \hat{f}(\hat{\textbf{w}}),
\end{equation*}
where we define $\hat{\textbf{w}}:=(P,Q,v,\ell,\textbf{X})$.
Since we have made a change-of-variables, we define the implicit function in terms of $\hat{\textbf{w}}$ that characterizes the system response to forecast errors $\xi$. By the implicit function theorem, there exists a unique function $\hat{u}(\hat{\textbf{w}},\xi)$ of the forecast error $\xi$ such that $\hat{\textbf{w}}=\hat{u}(\hat{\textbf{w}},\mathbf{0}_n)$ and $(\tilde{P},\tilde{Q},\tilde{v},\tilde{\ell},\tilde{\textbf{X}})=\hat{u}(\hat{\textbf{w}},\xi)$ satisfies Equations (\ref{eq:active_response})-(\ref{eq:voltage_response}) along with
\begin{subequations}
    \begin{align}
        \label{eq:angle_response_cov} \tilde{\textbf{X}}_{k+n} &= 0, \quad \forall k\in\mathcal{V\theta}, \\
        \label{eq:gen_power_flow_cov} \hat{f}(\tilde{P},\tilde{Q},\tilde{v},\tilde{\ell},\tilde{\textbf{X}}) &= 0.
    \end{align}
\end{subequations}
As before, we define
\[ \hat{\textbf{J}}_{\hat{\textbf{w}}} := \frac{\partial}{\partial\xi}u_{\mathcal{CC}}(\hat{\textbf{w}},\mathbf{0}_n),\quad \forall \mathbf{w}\in\mathbb{R}^{4n+n^2}. \]
which can be computed similarly to (\eqref{eq:ift}). Note that under the change-of-variables mapping (\ref{eq:cov}) between $\hat{\textbf{w}}$ and $\textbf{w}$, it holds that $\hat{u}_\mathcal{CC}(\hat{\textbf{w}},\xi) = u_\mathcal{CC}(\textbf{w},\xi)$. Given this equivalence, problem (\ref{eq:cc-opf-minlp}) becomes
\begin{align} \label{eq:acopf-reformulated}
        \min_{\hat{\textbf{w}}}~ & c(P) \\
        \nonumber\st~ & \hat{f}(\hat{\textbf{w}}) = 0, \\
        \nonumber& \text{if }\mathbf{z}_j = 0,\text{ then } \hat{\textbf{w}}_{\mathcal{CC}} + \left[\hat{\mathbf{J}}_{\hat{\mathbf{w}}}+\textbf{A}\right]\xi^j \leq \textbf{b},\quad\forall j\in[S], \\
        \nonumber& \mathbf{1}_S^T \mathbf{z} \leq S - k(\epsilon,r,S),\quad \mathbf{z}_j \in \{0, 1\},\quad \forall j \in [S].
\end{align}

\revise{Problem (\ref{eq:acopf-reformulated}) is now in a form that can be handled by local-search NC-MINLP solvers. Since these solvers are heuristics, one may wish to apply relaxations and approximations to (\ref{eq:acopf-reformulated}). In the following subsections, we explain two techniques which, for networks satisfying certain sufficient conditions, recast (\ref{eq:acopf-reformulated}) as a C-MINLP that can be solved globally by appropriate solvers.

\subsubsection{Fixed-point algorithm} \label{sec:fixed-point}
One source of nonconvexity in (\ref{eq:cc-opf-minlp}) is that the Jacobian $\hat{\mathbf{J}}_{\mathbf{\textbf{w}}}$ is nonconvex in $\hat{\textbf{w}}$. This can be partially addressed by applying the fixed-point iteration in \cite[Sec. V. B.]{roald_chance-constrained_2018}. The pseudo-code of this heuristic algorithm is provided in Algorithm \ref{alg:fixed-point}. Intuitively, if the initialization is close to the solution, the fixed-point iteration enjoys fast convergence. 

In most applications, the forecast errors are small relative to the forecast power injections and thus, our approximation scheme is considerably accurate in the following sense:
\begin{enumerate}
    \item The first-order approximation is acceptable under a wide range of operating conditions.
    \item The robust solution to the chance-constrained problem is expected to be not too far from the deterministic solution (i.e., the solution with $\xi=\mathbf{0}_n$). 
\end{enumerate}
As a consequence, although there is no convergence guarantee, the fixed-point iteration exhibits efficient and robust convergence in practice; see the numerical experiments in Section \ref{sec:numerical} and \cite{roald_chance-constrained_2018}.

A number of methods can be applied to solve step \ref{step:dro}. If the method proposed here is applied, step \ref{step:dro} becomes
\begin{align} \label{eq:cc-opf-minlp-fp}
        w_{t+1}\gets&\arg\min_{\textbf{w}}~  c(P) \\
        \nonumber\st~ & f(\textbf{w}) = 0, \\
        \nonumber& \text{if }\mathbf{z}_j = 0,\text{ then } \textbf{w}_{\mathcal{CC}} + \left(\textbf{J}_{\mathbf{w}_t}+\textbf{A}\right)\xi^j \leq \textbf{b},\quad \forall j\in[S], \\
        \nonumber& \mathbf{1}_S^T \mathbf{z} \leq S - k(\epsilon,r,S),\quad \mathbf{z}_j \in \{0, 1\},\quad \forall j \in [S].
\end{align}
Other authors, such as \cite{arab_distributionally_2022}, have taken an even simpler approach by using the Jacobian evaluated at the deterministic solution obtained from assuming a zero forecast error (i.e., $\mathbf{w}_0$ in Algorithm \ref{alg:fixed-point}).

\subsubsection{Convex relaxation} \label{sec:relaxation}

The second source of nonconvexity arises from the nonlinearity of $\hat{f}$. This may be addressed by applying the well-known semi-definite relaxation from \cite{lavaei_zero_2012}. The relaxation is known to be exact for a large class of networks, including many IEEE benchmark cases.

Other convex relaxations of AC power flow models exist and can be applied along with our method. These include the second-order cone relaxation for the QCQP formulation and the second-order cone relaxation of the branch flow equations, both of which are exact for radial networks under mild conditions \cite{low_convex_2014}. \revise{We note that it is necessary to apply the reduction method in \cite{low_convex_2014} to recover the rank-$1$ solution from the SDP solution, which usually has a higher rank. In our experiments, we are always able to recover a rank-$1$ solution and the results presented are computed using the recovered rank-$1$ voltage vector instead of the SDP solution matrix.}

If both the fixed-point iteration and a relaxation are applied, problem (\ref{eq:cc-opf-minlp}) reduces to a C-MINLP problem. Either method described here may also be used independently to reduce the nonconvexity of (\ref{eq:cc-opf-minlp}) before passing to a local-search solver.}
}
\begin{algorithm}[t]
\caption{Fixed-point iteration for joint CCOPF problem.}
\label{alg:fixed-point}
    \begin{algorithmic}[1]
    \State {\bf Input:} tolerance $\mu$, maximum violation probability $\epsilon$.
    \State {\bf Output:} robust solution $\textbf{w}$.
        \State \textbf{Initialization:} 
        \begin{align*}
            \textbf{w}_0\gets
            \arg\min_{\textbf{w}}~ & c(P) \\
            \st~ & f(\textbf{w}) = 0, \quad  \textbf{w}_\mathcal{CC} \leq \textbf{b}.
        \end{align*}
        \For{ $t=0,1,\dots$} 
            \State \label{step:dro} Update $\mathbf{w}_{t+1}$ to be the (approximate) solution of
            \begin{equation}
                \begin{aligned}
                    \min_{\textbf{w}}~ & c(P) \\
                    \st~ & f(\textbf{w}) = 0, \\
                    & \mathbb{P}_0\left[\mathbf{w}_\mathcal{CC} + \left(\mathbf{J}_{\mathbf{w}}+\mathbf{A})\right)\xi\right] \geq 1-\epsilon
                \end{aligned}
            \end{equation}
        \State \textbf{If} {$D\left(\mathbf{w}_{t+1} - \mathbf{w}_{t}\right) \leq \eta$,} \textbf{return} $\mathbf{w}_{t+1}$.
        \Statex\Comment{$D$ is some appropriate measure of distance.}
        \EndFor
    \end{algorithmic}
\end{algorithm}
\revise{
\section{Demonstration on IEEE Test Cases}
\label{sec:numerical}

In this section, we apply the proposed DRO approach to IEEE benchmark power systems and show the empirical performance of our method over existing methods in the literature. To help the readers better understand the operation of our approach, we first demonstrate the algorithm on a simple
$14$-bus example and explain each step in detail. Then, we consider both the DC and AC models for the CCOPF problem. We focus on the $14$-bus and $300$-bus systems, and we compare the performance with several existing DRO algorithms. For each case, the satisfaction rate, the generation cost and the running time are exhibited to verify our theory and illustrate the advantage of our developed approach.
}
\revise{
\subsection{Tutorial}
To build intuition, we first present a walkthrough of the proposed method on a simple example. Consider the IEEE 14-bus test case with VRE generators installed at buses 2 and 3, each with a forecast output of 20 MW. The VRE forecast error follows a multivariate normal distribution, constructed as described in Section \ref{sec:setup} (except without the clipping). For simplicity, we consider the DC model where only real generator outputs and line flows appear in the chance-constraints. Line flows in this example are set sufficiently high as to always be inactive, so we only consider generator output. The 14-bus case has 6 generators, located at buses 1, 2, 3, 6, and 8. Generators 1 and 2 have linear costs of \$20/MW, while the others have linear costs of \$40/MW; the quadratic components of the costs are relatively small. Assuming no forecast error and solving the deterministic OPF, the optimal dispatch is to supply the full net load of the network using generator 1, with all other generator outputs set to zero. However, under the AGC scheme described in Section \ref{sec:opf}, this decision will result in the zero-output generators carrying insufficient downward reserves in case of unexpectedly high VRE output.

Suppose that a system operator has access to 100 historical samples of the outputs of the VRE generators at buses 2 and 3 and wants to ensure that the remaining generators carry sufficient downward reserves at least $90\%$ of the time. To apply the proposed method, the system operator must enforce the constraints under all but a small number of the historical scenarios. Specifically, it must find the smallest $k$ such that $\epsilon_{k,100}^*$ is no more than $0.10$, where $\epsilon_{k,S}^*$ is given in (\ref{eqn:optimal-eps}). In this case, we have $\epsilon_{97,100}^*=0.109$ and $\epsilon_{98,100}^*=0.0924$. Therefore, to find the optimal generation coast, the operator enforces the constraints under the $98$ \textit{best-case} samples. Figure \ref{fig:tutorial_samples} shows the 100 samples and the worst-case pair selected by solving the mixed-integer reformulation.
\begin{figure}
    \centering
    \includegraphics[width=0.75\linewidth]{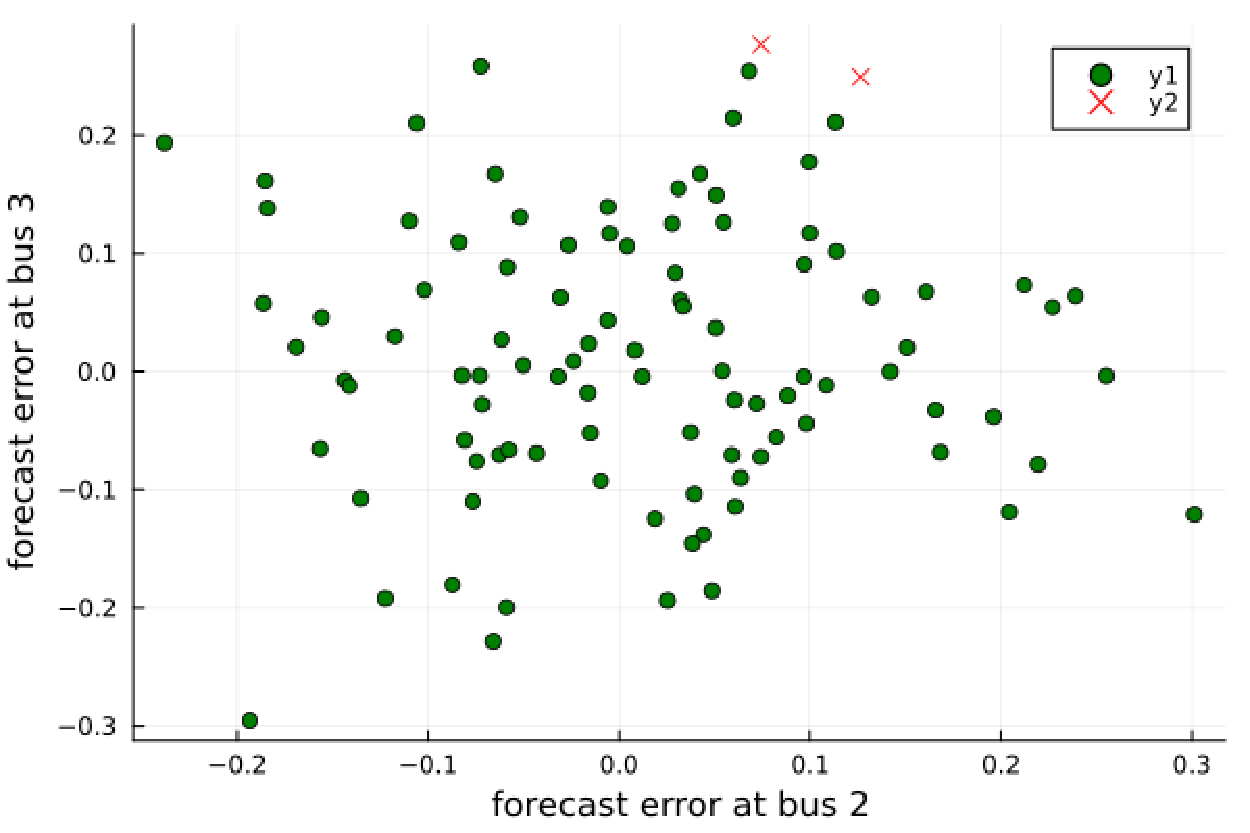}
    \caption{The 100 historical samples with the two worst-case samples in red.}
    \label{fig:tutorial_samples}
\end{figure}

As expected, the worst-case set of two samples for this example are those where VRE output exceeded the forecast, requiring higher downward reserves. Note that despite only requiring $90\%$ constraint satisfaction, $98\%$ of the samples must be enforced to obtain the out-of-sample performance guarantee from Theorem \ref{thm:drpres-1}. As more samples are available, fewer ``extra'' samples will be required as the empirical distribution will better approximate the true distribution. Practically, enforcing the $98$ best-case samples means that generators 2-6 produce a small amount of power to allow for downward generation response. This results in a higher total generation cost, but is much more robust to errors. Figure \ref{fig:tutorial_distribution} shows the output distribution of generator 2 after the AGC redispatch for 1000 test samples, drawn independently of the historical samples from the same generation distribution. The two subplots show the distributions for the zero-error deterministic setpoint and the distributionally robust KL setpoint. As expected, only a small number of cases result in an output below the lower bound of zero for the KL case, while the deterministic setpoint violates the chance constraints approximately half of the time.
\begin{figure}
    \centering
    \includegraphics[width=0.8\linewidth]{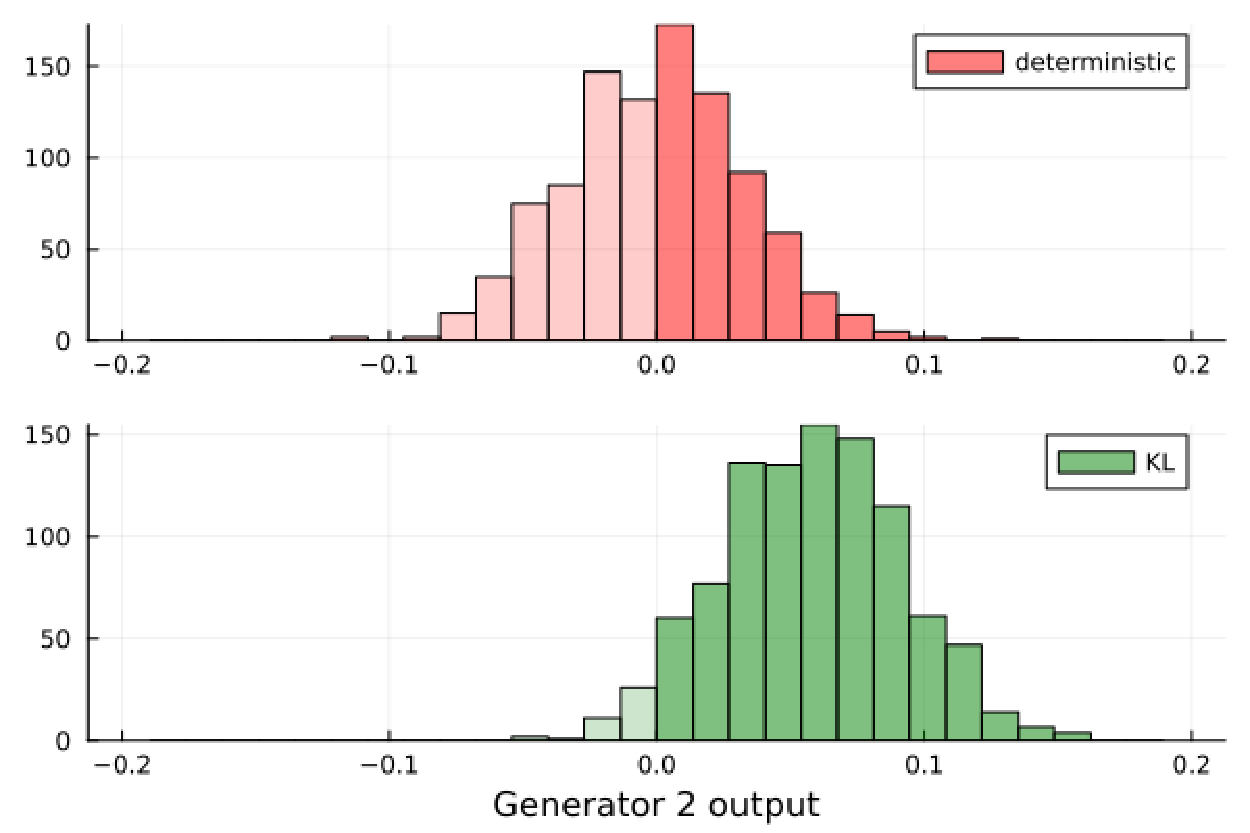}
    \caption{The output distribution of generator 2 under the proposed method compared to the zero-error dispatch.}
    \label{fig:tutorial_distribution}
\end{figure}
All data-based chance-constrained methods will accomplish the robustness in a similar way. However, we prove in Theorem \ref{thm:drpres-2} that among all methods which enjoy the out-of-sample performance guarantee in Theorem \ref{thm:drpres-1}, our proposed method is the most efficient in the sense that it achieves the optimal generation cost. In the context of this example, this means that generators 2-6 increase their outputs \textit{just enough} to maintain sufficient downward reserves $90\%$ of the time (with high probability), but not more.
}

\revise{
\subsection{Overview of benchmark methods}
Recall that the original problem of interest is (\ref{eq:cc-opf}). If a DC power flow model is used, then the joint chance constraint (\ref{eq:cc}) includes a finite number of linear constraints; we denote this as the DC problem. For AC power flow models, (\ref{eq:cc}) has an infinite number of constraints and we instead consider the approximate problem with chance constraint (\ref{eq:cc-approx}). Although (\ref{eq:cc-approx}) is affine in $\xi$, it remains nonconvex due to both the dependence of $\mathbf{J}_\mathbf{w}$ on $w$ and the nonlinear power flow equations (\ref{eq:deterministic}). We name the problem with (\ref{eq:cc-approx}) as the AC problem. The nonconvexity in the AC problem associated with $\mathbf{J}_\mathbf{w}$ can be mitigated by applying the fixed-point algorithm discussed in Section \ref{sec:fixed-point}; we call this the AC fixed-point approach (AC-FP). While the subproblems (step \ref{step:dro} in Algorithm \ref{alg:fixed-point}) in AC-FP remain nonconvex because of (\ref{eq:deterministic}), the chance-constraints are affine, allowing for the application of appropriate reformulations from the literature. For networks satisfying certain sufficient conditions, AC-FP can be convexified through the semi-definite relaxation discussed in Section \ref{sec:relaxation}, giving the relaxed fixed-point approach, which we name as SDP-FP.

We compare the proposed method (denoted as the KL method) against three existing methods from the literature for handling joint chance constraints using limited samples: the Wasserstein-based ellipsoidal method (W-E) from \cite{arab_distributionally_2022}, the optimal CVAR approximation (OPT-CVAR) from \cite[Sec. 4.3]{ordoudis_energy_2021}, and the moment-based optimized Bonferroni approximation (M-OBA) method from \cite[Sec. 3.A.]{yang_tractable_2022}. As discussed in Section \ref{sec:intro}, most other methods from the literature either consider \textit{individual} chance constraints or, in the case of the sample average or scenario methods, require much more samples than we presume are available to system operators. These methods are not suitable for comparison as they do not share the problem setting considered in this work. The methods selected for comparison constitute the state-of-the-art in the CCOPF space to the best of the authors' knowledge.

The authors of \cite{yang_tractable_2022} used the M-OBA to approximate joint affine chance-constraints as a single nonconvex deterministic constraint. Additionally, multiple approximations and convexifications of this constraint are presented. However, solving the nonconvex OBA problem locally seems to have similar performance in practice to the approximations as seen in \cite[Sec. V]{yang_tractable_2022}. Therefore, we solve the OBA formulation locally as the representative method of \cite{yang_tractable_2022}.

The KL approach could be directly applied to the AC problem (without the fixed-point iteration or the convex relaxation). However, given the reliable performance of Algorithm \ref{alg:fixed-point} in practice and to avoid corrupting our comparisons by potential non-global solutions, we solve the SDP-FP formulation in our simulations for the AC problem.
}
\revise{
\subsection{Experimental setup} \label{sec:setup}
We demonstrate our method on the joint CCOPF problem (\ref{eq:cc-opf}) for the IEEE 14- and 300-bus test cases. The starting system parameters are taken from the case data in MATPOWER 8.0 \cite{zimmerman_matpower_2011,zimmerman_matpower_2020} and are prepossessed to allow for matrix-based analysis using PowerModels.jl's \texttt{make\_basic\_network} utility \cite{coffrin_powermodels_2018}. Among other things, this preprocessing step adds reasonable line limits even when they do not exist in the MATPOWER files.

We install VRE generators at the same buses and with the same outputs as the simulations in \cite{yang_tractable_2022}, except we triple the forecasts for the 300-node case. The participation factors $\Omega$ are set proportional to the maximum output of each conventional generator.

Again following \cite{yang_tractable_2022}, the samples $\xi$ are drawn from a multivariate Gaussian distribution $\mathcal{N}(\mu,\Sigma)$ with $\mu=0$ and
\begin{align*}
    \Sigma_{ii} &= \zeta p_i,\quad \forall i,\quad \text{and }\Sigma_{ij} = \rho_{ij}\sqrt{\Sigma_{ii}}\sqrt{\Sigma_{jj}},\quad \forall i \neq j,
\end{align*}
where $p_i$ is the forecast output of the VRE generator at bus $i$ and $\rho_{ij}>0$ induces some correlation between VRE outputs, as will happen in practice. We set $\zeta=0.05$ for the 14-bus case, $\zeta=0.1$ for the 300-bus case, and $\rho_{ij}=\rho=0.2$ for both cases. Unlike in \cite{yang_tractable_2022}, we clip the $i$-th component of each sample to $[-p_i,2p_i]$ to model a VRE generator operating at one-third of its capacity. We set $\gamma=0.1$ so that a small amount of reactive power is included in the forecast error. For each case, we generate $200$ historical samples and compute the out-of-sample joint chance constraint violation frequency on $10000$ independent test samples.

All optimization problems are solved through the JuMP interface \cite{lubin_jump_2023}, where QPs and MIQPs are solved by Gurobi \cite{gurobi_optimization_llc_gurobi_2023}, SDPs are solved by Clarabel \cite{goulart_clarabel_2024} and MISDPs are solved with Pajarito \cite{coey_outer_2020} with SCIP \cite{bolusani_scip_2024} as the outer approximation solver and Clarabel as the conic solver. Simulations are run on in Julia 1.11.0 on a 6-core 2.60 GHz laptop computer running Windows 11.
}
\revise{
\subsection{Comparison with OPT-CVAR and M-OBA on 14-bus DC model}
We first present results for the DC model of the 14-bus network. Comparisons are made with OPT-CVAR and M-OBA, both of which were introduced for DC problems. In this context, the proposed KL method needs to solve a MIQP problem. The OPT-CVAR solves a sequence of QPs and LPs, and the M-OBA involves solving a NC-NLP. Recall that the KL method is most naturally applied by choosing the minimum $k$ such that $\epsilon_{k,S}^*$ as computed in (\ref{eqn:optimal-eps}) is at least the desired maximum violation probability $\epsilon$. The other methods in the literature are applied by choosing $\epsilon$ directly. Therefore, we compare the methods by testing on each $k$ from $180$ to $200$, in intervals of $2$, and feeding the associated $\epsilon_{k,S}^*$ to the other methods. For each $k$, we compute the generation cost (efficiency) of the solution, the out-of-sample violation frequency and the solve time. For OPT-CVAR, \footnote{We assume an unbounded support $\Xi$ for the implementation of OPT-CVAR as including the associated variables ($\gamma$) proves extremely memory-intensive.}, we choose a Wasserstein radius of $\rho=10^{-3}$ and a minimum relative improvement threshold of $\eta=10^{-5}$. The results are shown in Figure \ref{fig:14-dc}, where results of the KL method are shown as a solid line while comparative methods are dotted.
\begin{figure}
    \centering
    \includegraphics[width=0.7\linewidth]{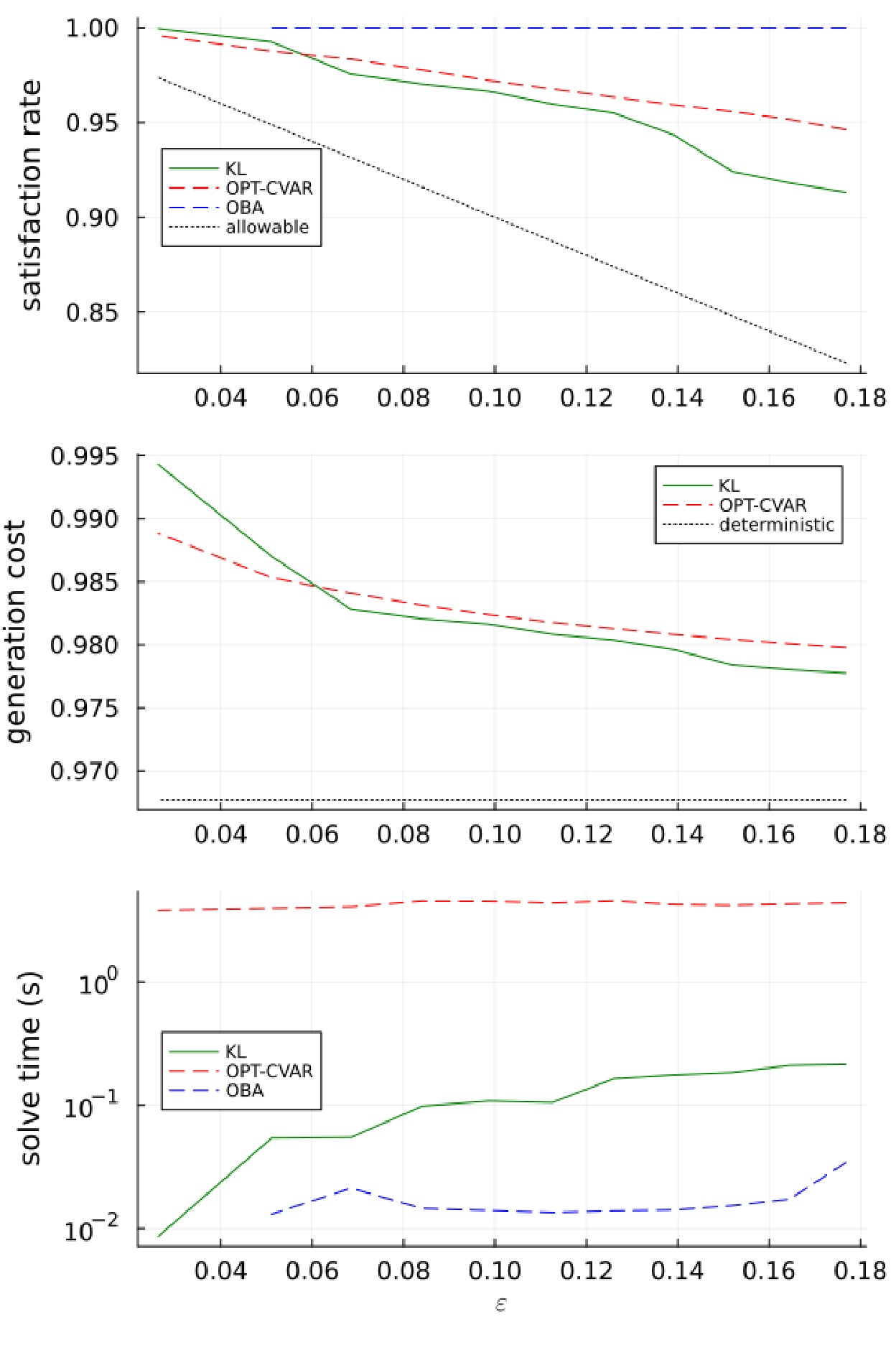}
    \caption{Performance on the 14-bus DC network.}
    \label{fig:14-dc}
\end{figure}

The dotted line in the first subplot is simply $y=1-x$, representing the desired minimum out-of-sample violation frequency. We expect the satisfaction rate of CC solutions to remain above this line. This subplot gives the \textit{joint} constraint violation; that is, if any single constraint is violated for a given sample, the joint chance constraint for sample is considered to be violated. While KL and OPT-CVAR behave as expected, OBA produces overconservative solutions with zero constraint violation probability regardless of $\epsilon$ (OBA was infeasible for the smallest value of $\epsilon$ tested in this experiment). We note that since the DRO solution is constructed based on the empirical distribution $\hat{\mathbb{P}}_S$, there exists a gap between the estimated satisfaction rate and the actual satisfaction rate; see the error term in Theorem \ref{thm:drpres-1}. If the number of samples $S$ becomes larger, the empirical distribution $\hat{\mathbb{P}}_S$ will correlate better with $\mathbb{P}_0$ and the satisfaction rate will converge to the pre-set value $1-\epsilon$.

The second subplot, showing the generation cost, is normalized to the approximate robust optimization (RO) cost, which is computed by enforcing the constraints of 10000 training samples. As DRO is designed as a compromise between stochastic programming and RO, distributionally robust methods should generally achieve more efficient solutions than RO at the cost of some robustness; that is, one should expect the costs to be less than 1 in this subplot. As the KL method reformulates the chance constraints exactly, this is guaranteed to be true. Other methods, however, use approximations to the chance constraint and may be more conservative than RO in general. Indeed, OBA proves to be extremely conservative, with normalized costs greater than 1.006 for all values of $\epsilon$ and is omitted from the generation cost subplot so as not to distort the trends of KL and OPT-CVAR. The deterministic solution, achieved by solving the (non-chance-constrained) problem without forecast errors, is also included as another benchmark. The deterministic solution is \textit{not} robust and will violate the chance constraints very often, but its cost represents a loose lower bound on the achievable efficiency gains of DRO relative to RO.

Both KL and OPT-CVAR exhibit the efficiency-robustness trade-off as expected from DRO. As $\epsilon$ grows, their out-of-sample constraint satisfaction rate dips, though it remains well above the $1-\epsilon$ threshold. As a consequence, they close a significant portion of the distance between the RO and deterministic solutions. In these simulations, KL is slightly more efficient than OPT-CVAR for larger $\epsilon$ and slightly less efficient for smaller $\epsilon$. That CVAR may achieve lower costs than KL is not a counterexample to the optimality guarantee in Theorem \ref{thm:drpres-2}, as this guarantee only applies to methods featuring the specific out-of-sample performance guarantees in Theorem \ref{thm:drpres-1}; CVAR, as a Wasserstein-based method, does not fall into this category.

The final plot gives the solution time on a log scale. The OBA runs the fastest, followed by KL, which is an order of magnitude faster than OPT-CVAR. Notice that the solution time of KL is lower for small values of $\epsilon$ since the number of feasible integer solutions for a given $k$ is $\binom{S}{k}$, which grows at an exponential rate with respect to $k$.

\subsection{Comparison with W-E on 14-bus AC model}
We now test the proposed method on the SDP-FP formulation using the AC power flow model presented in Section \ref{sec:ac-ccopf} for the 14-bus network. We consider the SDP-FP formulation instead of the AC or AC-FP problems since it results in a convex problem and we are able to guarantee the global optimality; the semi-definite relaxation is known to be exact for the 14-bus network \cite{lavaei_zero_2012}. However, we emphasize that our method could be applied to the AC or AC-FP problems with minor changes.

We compare with W-E, as it was originally presented for (relaxed) AC models. The physical solution to the original problem is recovered from the matrix solution of the relaxed problem by fixing the optimal values of $\begin{bmatrix}v_{\mathcal{V}\Theta\cup\mathcal{PV}} & P_\mathcal{PV}\end{bmatrix}$ and running a power flow (one could also apply \cite[Cor. 2]{lavaei_zero_2012}). All results presented here are calculated using the recovered physical solution, not the relaxed solution. In this context, the KL needs to solve an MISDP and W-E involves solving a pair of SDPs.

The KL method applies the fixed-point algorithm (Algorithm \ref{alg:fixed-point}) to reduce the nonconvexity in $\mathbf{J}_{\textbf{w}}$. In our experiments, we set $\eta=10^{-4}$ and $D$ is taken to be the 2-norm between the subvectors $\begin{bmatrix}v_{\mathcal{V}\Theta\cup\mathcal{PV}} & P_\mathcal{PV}\end{bmatrix}$, which together fully specify the power flow solution. For all experiments, the fixed-point algorithm converged in 3 iterations or fewer.
%
In contrast, in \cite{arab_distributionally_2022}, the authors simply linearized around the deterministic solution rather than apply the fixed-point algorithm. The Wasserstein radius ($\rho$ in \cite{arab_distributionally_2022}) is set to $10^{-6}$. This is the smallest (least conservative) radius considered in \cite{arab_distributionally_2022}. As seen in Figure \ref{fig:14_ac}, KL is considerably less conservative even with such an aggressive choice of Wasserstein radius. For small values of $\epsilon$, W-E produced a solution more conservative than RO. In terms of running time, W-E runs faster than KL since KL involves a MISDP and W-E only includes solving SDPs. In summary, we observe that in the two 14-bus experiments, the metric-based methods (KL, OPT-CVAR, and W-E) outperform the moment-based method (OBA) in general.

\begin{figure}
    \centering
    \includegraphics[width=0.7\linewidth]{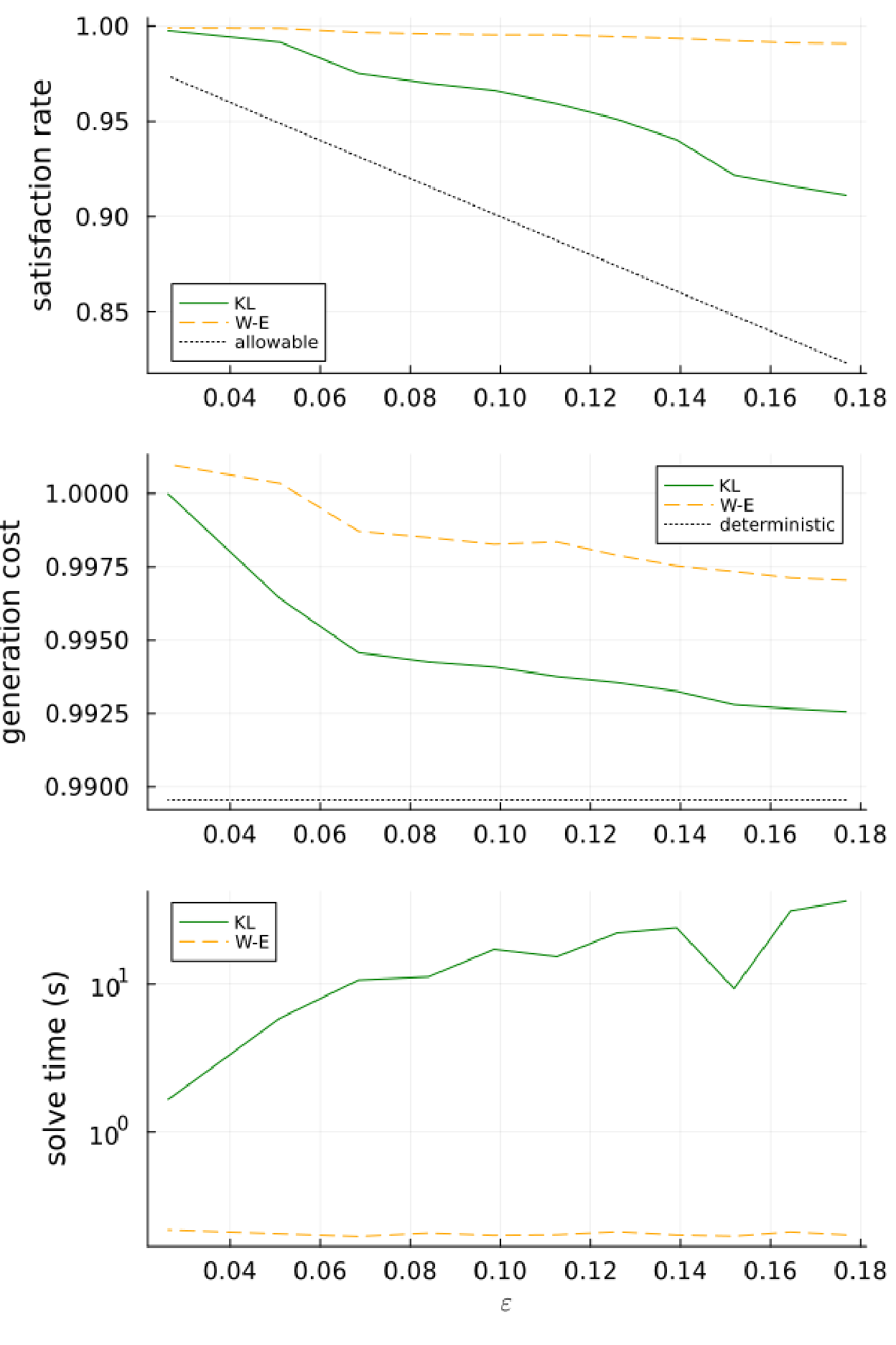}
    \caption{Performance on the 14-bus AC network.}
    \label{fig:14_ac}
\end{figure}

\subsection{Results for the 300-bus network}
To demonstrate the scalability of the proposed method, we demonstrate the performance of KL on the 300-bus network using the DC power flow model presented in Section \ref{sec:dc-ccopf} and compare with the OPT-CVAR method. We generate $S=300$ training samples and $10000$ test samples. The sweep of $k$ is from $270$ to $300$ in increments of $3$.

The KL method performs on the 300-node network as it does on the 14-bus network, with the violation rate dropping as permitted by $\epsilon$ and bridging the gap in efficiency between the RO and deterministic solutions. As for the 14-bus network, KL is slightly more conservative than OPT-CVAR for small $\epsilon$ and slightly less conservative for large $\epsilon$. For all $\epsilon$, KL ran for the larger network with more samples in less than 10 seconds on a personal computer, approximately an order of magnitude faster than OPT-CVAR. As OPF problems are typically solved offline, this runtime is promising for larger-scale implementations.

Notice that, though the values of $k$ range from $90\%$ to $100\%$ of $S$ in all three experiments, the values of $\epsilon$ on the x-axis are lower for when 300 samples are used rather than 200. This is because $\epsilon$ better approximates $k/S$ as the sample size $S$ grows. Intuitively, as more historical data is available, the empirical distribution more closely approximates the data-generation distribution, allowing one to enjoy the same high-probability bound while enforcing the constraints of fewer samples.

\begin{figure}
    \centering
    \includegraphics[width=0.7\linewidth]{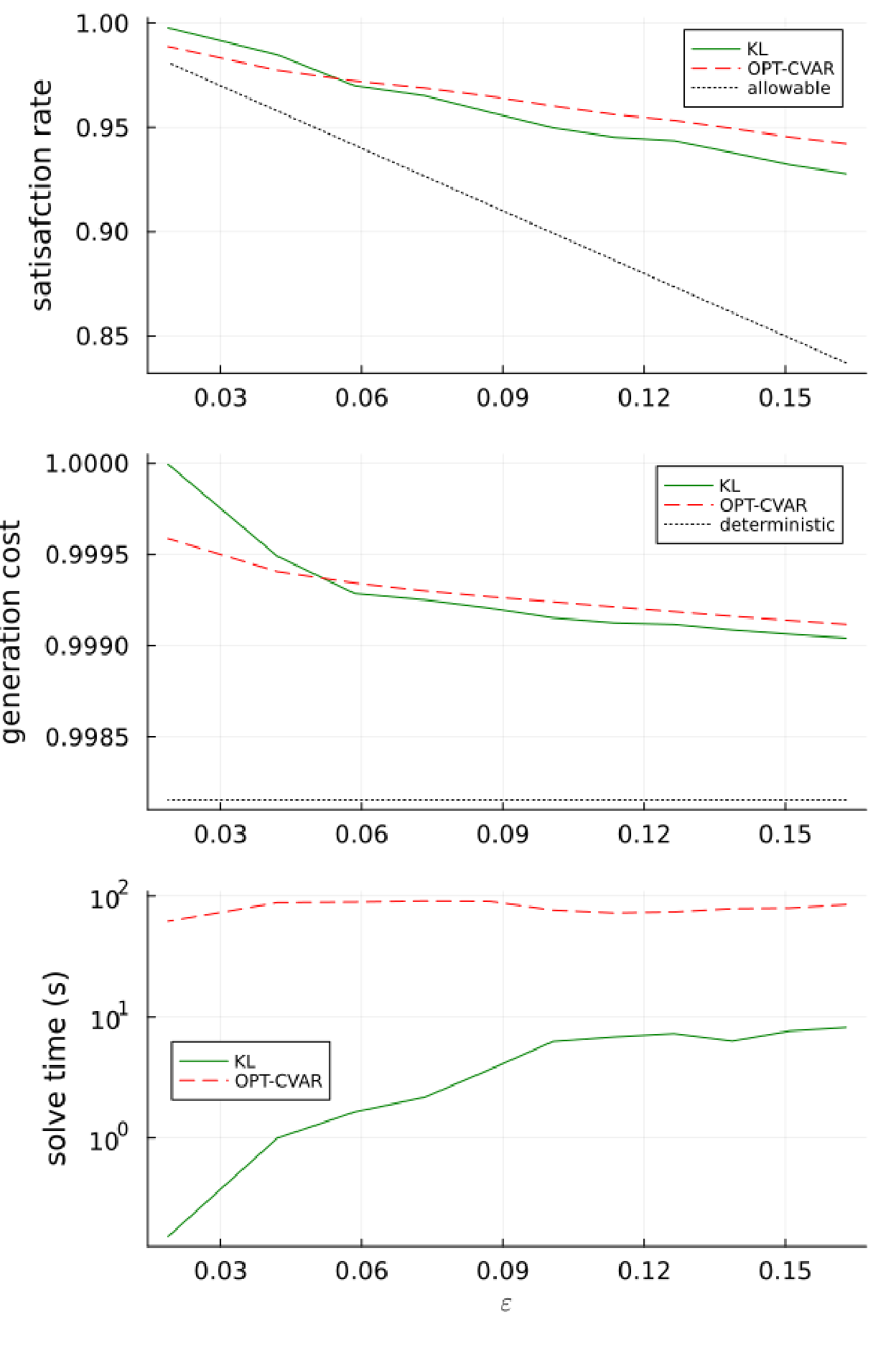}
    \caption{Performance on 300-bus DC network.}
    \label{fig:14}
\end{figure}
}

\vspace{-0.5em}
\revise{
\section{Conclusion}
\label{sec:conclusion}

This work proposes a novel distributionally robust optimization technique for chance-constrained OPF problems with large random VRE forecast errors. Between the theoretical and empirical results, the advantages of the proposed method can be broadly categorized as follows:
\begin{enumerate}
    \item \textbf{Efficiency}. Theorem \ref{thm:drpres-2} guarantees that the KL solution is the most efficient in terms of the generation cost among all solutions achieving the out-of-sample performance guarantees established in Theorem \ref{thm:drpres-1}. In our experiments, KL is considerably more efficient than the state-of-the-art methods in \cite{yang_tractable_2022} and \cite{arab_distributionally_2022}, which are sometimes more conservative than even the robust optimization method. Due to the exact reformulation in \ref{cor:main}, KL can never be more conservative than RO. For the computational efficiency, KL achieves performance similar to that of OPT-CVAR with faster runtime in our simulations.
    \item \textbf{Stability and ease of use.} Unlike other methods presented in Section \ref{sec:numerical}, KL does not require the tuning of any hyperparameters, particularly an ambiguity set radius, which is unintuitive for practitioners. In addition, KL does not ask the user to make implementation decisions as in \cite{yang_tractable_2022}, which includes five different variants. Moreover, due to Corollary \ref{cor:main}, KL is guaranteed to be feasible as long as the mild Assumption \ref{asp:dro} is satisfied while methods that rely on conservative approximations may become infeasible for small $\epsilon$. The reliability of the method is particularly important for applications with large-scale networks and AC power flow models.
\end{enumerate}
}


\vspace{-1em}
\section*{Acknowledgments}
This material is based upon work supported in part by the U. S. Army Research Laboratory and the U. S. Army Research Office under grant number W911NF2010219. It was also supported by ONR, AFOSR, NSF, and the UC Noyce Initiative. The authors would like to thank Julie Mulvaney Kemp for comments and suggestions.
\vspace{-1em}

\bibliographystyle{ieeetr}
\bibliography{ref.bib}

\begin{thebibliography}{10}

\bibitem{jabr_robust_2015}
R.~A. Jabr, S.~Karaki, and J.~A. Korbane, ``Robust {Multi}-{Period} {OPF}
  {With} {Storage} and {Renewables},'' {\em IEEE Transactions on Power
  Systems}, vol.~30, pp.~2790--2799, Sept. 2015.

\bibitem{louca_robust_2019}
R.~Louca and E.~Bitar, ``Robust {AC} {Optimal} {Power} {Flow},'' {\em IEEE
  Transactions on Power Systems}, vol.~34, pp.~1669--1681, May 2019.

\bibitem{roald_chance-constrained_2018}
L.~Roald and G.~Andersson, ``Chance-{Constrained} {AC} {Optimal} {Power}
  {Flow}: {Reformulations} and {Efficient} {Algorithms},'' {\em IEEE
  Transactions on Power Systems}, vol.~33, no.~3, pp.~2906--2918, 2018.

\bibitem{venzke_convex_2018}
A.~Venzke, L.~Halilbasic, U.~Markovic, G.~Hug, and S.~Chatzivasileiadis,
  ``Convex {Relaxations} of {Chance} {Constrained} {AC} {Optimal} {Power}
  {Flow},'' {\em IEEE Transactions on Power Systems}, vol.~33, pp.~2829--2841,
  May 2018.

\bibitem{pagnoncelli_sample_2009}
B.~K. Pagnoncelli, S.~Ahmed, and A.~Shapiro, ``Sample {Average} {Approximation}
  {Method} for {Chance} {Constrained} {Programming}: {Theory} and
  {Applications},'' {\em Journal of Optimization Theory and Applications},
  vol.~142, pp.~399--416, Aug. 2009.

\bibitem{campi_scenario_2009}
M.~C. Campi, S.~Garatti, and M.~Prandini, ``The scenario approach for systems
  and control design,'' {\em Annual Reviews in Control}, vol.~33, pp.~149--157,
  Dec. 2009.

\bibitem{rahimian_frameworks_2022}
H.~Rahimian and S.~Mehrotra, ``Frameworks and {Results} in {Distributionally}
  {Robust} {Optimization},'' {\em Open Journal of Mathematical Optimization},
  vol.~3, pp.~1--85, 2022.

\bibitem{lubin_robust_2016}
M.~Lubin, Y.~Dvorkin, and S.~Backhaus, ``A {Robust} {Approach} to {Chance}
  {Constrained} {Optimal} {Power} {Flow} {With} {Renewable} {Generation},''
  {\em IEEE Transactions on Power Systems}, vol.~31, pp.~3840--3849, Sept.
  2016.

\bibitem{zhang_distributionally_2016}
Y.~Zhang, S.~Shen, and J.~Mathieu, ``Distributionally {Robust}
  {Chance}-{Constrained} {Optimal} {Power} {Flow} with {Uncertain} {Renewables}
  and {Uncertain} {Reserves} {Provided} by {Loads},'' {\em IEEE Transactions on
  Power Systems}, pp.~1--1, 2016.

\bibitem{wei_distributionally_2016}
W.~Wei, F.~Liu, and S.~Mei, ``Distributionally {Robust} {Co}-{Optimization} of
  {Energy} and {Reserve} {Dispatch},'' {\em IEEE Transactions on Sustainable
  Energy}, vol.~7, pp.~289--300, Jan. 2016.

\bibitem{xie_distributionally_2018}
W.~Xie and S.~Ahmed, ``Distributionally {Robust} {Chance} {Constrained}
  {Optimal} {Power} {Flow} with {Renewables}: {A} {Conic} {Reformulation},''
  {\em IEEE Transactions on Power Systems}, vol.~33, pp.~1860--1867, Mar. 2018.

\bibitem{li_distributionally_2019}
B.~Li, R.~Jiang, and J.~L. Mathieu, ``Distributionally {Robust}
  {Chance}-{Constrained} {Optimal} {Power} {Flow} {Assuming} {Unimodal}
  {Distributions} {With} {Misspecified} {Modes},'' {\em IEEE Transactions on
  Control of Network Systems}, vol.~6, pp.~1223--1234, Sept. 2019.

\bibitem{mohajerin_esfahani_data-driven_2018}
P.~Mohajerin~Esfahani and D.~Kuhn, ``Data-driven distributionally robust
  optimization using the {Wasserstein} metric: performance guarantees and
  tractable reformulations,'' {\em Mathematical Programming}, vol.~171,
  pp.~115--166, Sept. 2018.

\bibitem{van_parys_data_2021}
B.~P.~G. Van~Parys, P.~M. Esfahani, and D.~Kuhn, ``From {Data} to {Decisions}:
  {Distributionally} {Robust} {Optimization} {Is} {Optimal},'' {\em Management
  Science}, vol.~67, pp.~3387--3402, June 2021.

\bibitem{duan_distributionally_2018}
C.~Duan, W.~Fang, L.~Jiang, L.~Yao, and J.~Liu, ``Distributionally {Robust}
  {Chance}-{Constrained} {Approximate} {AC}-{OPF} {With} {Wasserstein}
  {Metric},'' {\em IEEE Transactions on Power Systems}, vol.~33,
  pp.~4924--4936, Sept. 2018.

\bibitem{zhou_linear_2020}
A.~Zhou, M.~Yang, M.~Wang, and Y.~Zhang, ``A {Linear} {Programming}
  {Approximation} of {Distributionally} {Robust} {Chance}-{Constrained}
  {Dispatch} {With} {Wasserstein} {Distance},'' {\em IEEE Transactions on Power
  Systems}, vol.~35, pp.~3366--3377, Sept. 2020.

\bibitem{baker_joint_2019}
K.~Baker and A.~Bernstein, ``Joint {Chance} {Constraints} in {AC} {Optimal}
  {Power} {Flow}: {Improving} {Bounds} {Through} {Learning},'' {\em IEEE
  Transactions on Smart Grid}, vol.~10, pp.~6376--6385, Nov. 2019.

\bibitem{yang_tractable_2022}
L.~Yang, Y.~Xu, H.~Sun, and W.~Wu, ``Tractable {Convex} {Approximations} for
  {Distributionally} {Robust} {Joint} {Chance}-{Constrained} {Optimal} {Power}
  {Flow} {Under} {Uncertainty},'' {\em IEEE Transactions on Power Systems},
  vol.~37, pp.~1927--1941, May 2022.

\bibitem{guo_data-based_2019}
Y.~Guo, K.~Baker, E.~Dall'Anese, Z.~Hu, and T.~H. Summers, ``Data-{Based}
  {Distributionally} {Robust} {Stochastic} {Optimal} {Power} {Flow}—{Part}
  {II}: {Case} {Studies},'' {\em IEEE Transactions on Power Systems}, vol.~34,
  pp.~1493--1503, Mar. 2019.

\bibitem{guo_data-based_2019-1}
Y.~Guo, K.~Baker, E.~Dall'Anese, Z.~Hu, and T.~H. Summers, ``Data-{Based}
  {Distributionally} {Robust} {Stochastic} {Optimal} {Power} {Flow}—{Part}
  {I}: {Methodologies},'' {\em IEEE Transactions on Power Systems}, vol.~34,
  pp.~1483--1492, Mar. 2019.

\bibitem{poolla_wasserstein_2021}
B.~K. Poolla, A.~R. Hota, S.~Bolognani, D.~S. Callaway, and A.~Cherukuri,
  ``Wasserstein {Distributionally} {Robust} {Look}-{Ahead} {Economic}
  {Dispatch},'' {\em IEEE Transactions on Power Systems}, vol.~36,
  pp.~2010--2022, May 2021.

\bibitem{arab_distributionally_2022}
A.~Arab and J.~E. Tate, ``Distributionally {Robust} {Optimal} {Power} {Flow}
  via {Ellipsoidal} {Approximation},'' {\em IEEE Transactions on Power
  Systems}, 2022.
\newblock Publisher: IEEE.

\bibitem{ordoudis_energy_2021}
C.~Ordoudis, V.~A. Nguyen, D.~Kuhn, and P.~Pinson, ``Energy and reserve
  dispatch with distributionally robust joint chance constraints,'' {\em
  Operations Research Letters}, vol.~49, pp.~291--299, May 2021.

\bibitem{brock_distributionally_2023}
E.~Brock, H.~Zhang, J.~Mulvaney~Kemp, J.~Lavaei, and S.~Sojoudi,
  ``Distributionally {Robust} {Optimization} for {Nonconvex} {QCQPs} with
  {Stochastic} {Constraints},'' in {\em 2023 62th {IEEE} {Conference} on
  {Decision} and {Control} ({CDC})}, pp.~1--7, IEEE, 2023.

\bibitem{cover_elements_1999}
T.~M. Cover, {\em Elements of information theory}.
\newblock John Wiley \& Sons, 1999.

\bibitem{calafiore_ambiguous_2007}
G.~C. Calafiore, ``Ambiguous {Risk} {Measures} and {Optimal} {Robust}
  {Portfolios},'' {\em SIAM Journal on Optimization}, vol.~18, pp.~853--877,
  Jan. 2007.

\bibitem{hong_kullback-leibler_2012}
J.~L. Hong and Z.~Hu, ``Kullback-{Leibler} {Divergence} {Constrained}
  {Distributionally} {Robust} {Optimization},'' Nov. 2012.

\bibitem{lam_robust_2016}
H.~Lam, ``Robust {Sensitivity} {Analysis} for {Stochastic} {Systems},'' {\em
  Mathematics of Operations Research}, vol.~41, pp.~1248--1275, Nov. 2016.

\bibitem{jiang_data-driven_2016}
R.~Jiang and Y.~Guan, ``Data-driven chance constrained stochastic program,''
  {\em Mathematical Programming}, vol.~158, pp.~291--327, July 2016.

\bibitem{lavaei_zero_2012}
J.~Lavaei and S.~H. Low, ``Zero {Duality} {Gap} in {Optimal} {Power} {Flow}
  {Problem},'' {\em IEEE Transactions on Power Systems}, vol.~27, pp.~92--107,
  Feb. 2012.
\newblock Conference Name: IEEE Transactions on Power Systems.

\bibitem{low_convex_2014}
S.~H. Low, ``Convex {Relaxation} of {Optimal} {Power} {Flow}—{Part} {I}:
  {Formulations} and {Equivalence},'' {\em IEEE Transactions on Control of
  Network Systems}, vol.~1, pp.~15--27, Mar. 2014.

\bibitem{zimmerman_matpower_2011}
R.~D. Zimmerman, C.~E. Murillo-Sanchez, and R.~J. Thomas, ``{MATPOWER}:
  {Steady}-{State} {Operations}, {Planning}, and {Analysis} {Tools} for {Power}
  {Systems} {Research} and {Education},'' {\em IEEE Transactions on Power
  Systems}, vol.~26, pp.~12--19, Feb. 2011.

\bibitem{zimmerman_matpower_2020}
R.~D. Zimmerman and C.~E. Murillo-Sánchez, ``{MATPOWER},'' Oct. 2020.
\newblock Language: en.

\bibitem{coffrin_powermodels_2018}
C.~Coffrin, R.~Bent, K.~Sundar, Y.~Ng, and M.~Lubin, ``{PowerModels}. {JL}:
  {An} {Open}-{Source} {Framework} for {Exploring} {Power} {Flow}
  {Formulations},'' in {\em 2018 {Power} {Systems} {Computation} {Conference}
  ({PSCC})}, pp.~1--8, June 2018.

\bibitem{lubin_jump_2023}
M.~Lubin, O.~Dowson, J.~D. Garcia, J.~Huchette, B.~Legat, and J.~P. Vielma,
  ``{JuMP} 1.0: recent improvements to a modeling language for mathematical
  optimization,'' {\em Mathematical Programming Computation}, vol.~15,
  pp.~581--589, Sept. 2023.

\bibitem{gurobi_optimization_llc_gurobi_2023}
{Gurobi Optimization, LLC}, ``Gurobi {Optimizer} {Reference} {Manual},'' 2023.

\bibitem{goulart_clarabel_2024}
P.~J. Goulart and Y.~Chen, ``Clarabel: {An} interior-point solver for conic
  programs with quadratic objectives,'' May 2024.
\newblock arXiv:2405.12762.

\bibitem{coey_outer_2020}
C.~Coey, M.~Lubin, and J.~P. Vielma, ``Outer approximation with conic
  certificates for mixed-integer convex problems,'' {\em Mathematical
  Programming Computation}, vol.~12, pp.~249--293, June 2020.

\bibitem{bolusani_scip_2024}
S.~Bolusani, ``The {SCIP} {Optimization} {Suite} 9.0,'' Feb. 2024.

\bibitem{wolsey_integer_1999}
L.~A. Wolsey and G.~L. Nemhauser, {\em Integer and combinatorial optimization},
  vol.~55.
\newblock John Wiley \& Sons, 1999.

\bibitem{neubrunn_quasi-continuity_1988}
T.~Neubrunn, ``Quasi-continuity,'' {\em Real Analysis Exchange}, vol.~14,
  no.~2, pp.~259--306, 1988.

\bibitem{prokhorov_convergence_1956}
Y.~V. Prokhorov, ``Convergence of {Random} {Processes} and {Limit} {Theorems}
  in {Probability} {Theory},'' {\em Theory of Probability \& Its Applications},
  vol.~1, pp.~157--214, Jan. 1956.

\end{thebibliography}

\newpage
\begin{appendix}

\subsection{Omitted Proofs in Section \ref{sec:dro}}

In this section, we first introduce the necessary notation and then present the proofs of the theoretical results in Section \ref{sec:dro}.
Define the $\alpha$-quantile
\begin{align*} 
    q_\alpha(F, \mathbb{P}) := \sup \left\{ q ~|~  \mathbb{P}\left[ F(\xi) \leq q \right] \leq \alpha \right\}
\end{align*}
for all $\alpha\in[0,1]$, function $F(\cdot):\R^n \mapsto \R$ and distribution $\mathbb{P}\in\mathcal{P}$. Then, the chance constraint (\ref{eqn:cc-joint-general}) can be equivalently written as
\begin{align}
\label{eqn:cc-joint-cone}
    \nonumber &\mathbb{P}_0\left[ h_\ell(\mathbf{X},\xi) \leq 0,\quad\forall \ell \in [m] \right] \geq 1 - \epsilon\\
    \nonumber \iff &\mathbb{P}_0\left[ \bar{h}(\mathbf{X}, \xi) \leq 0 \right] \geq 1 - \epsilon\\
    \iff &q_{1 - \epsilon}\left[\bar{h}(\mathbf{X}, \cdot), \mathbb{P}_0\right] \leq 0,
\end{align}
%
Adopting language from \cite{van_parys_data_2021}, we first introduce the distributionally robust predictor of the $\alpha$-quantile.
\begin{definition}[Distributionally Robust Predictor]
For all $\epsilon\in(0,1]$, $r > 0$, $\mathbf{X}\in\R^d$ and $\mathbb{P}\in\mathcal{P}$, the distributionally robust predictor is defined as
\begin{align*}
    \hat{q}_{1 - \epsilon, r, \mathbb{P}}(\mathbf{X}) := \sup_{\mathbb{P}'\in\mathcal{D}_r(\mathbb{P})}~ q_{1-\epsilon}\left[\bar{h}(\mathbf{X},\cdot), \mathbb{P}'\right].
\end{align*}
%
\end{definition}
Intuitively, the distributionally robust predictor is the \textit{worst-case} $\alpha$-quantile over all distributions in the relative entropy ball $\mathcal{D}_r(\mathbb{P})$. In Lemma \ref{lem:drpred}, we show that the distributionally robust predictor is either a quantile of $\bar{h}_{\mathbf{X}}$ under the empirical distribution $\hat{\mathbb{P}}_S$ or the maximum value $h^*(\textbf{X})$. We restate Lemma \ref{lem:drpred} for the reader's convenience.
%
%
\begin{lemma}[Restatemeant of Lemma \ref{lem:drpred}]
For all $\epsilon \in (0,1]$ and $r > 0$, there exists an integer $k(\epsilon, r, S) \in [S + 1]$ such that
\begin{align*}
    \hat{q}_{1 - \epsilon, r, \hat{\mathbb{P}}_S}(\mathbf{X}) = \bar{h}_{k(\epsilon, r, S), \hat{\mathbb{P}}_S}\left( \mathbf{X} \right),\quad \forall \mathbf{X} \in \R^d.
\end{align*}
%
When there is no confusion, we denote for the notational simplicity $k \coloneqq k(\epsilon, r, S)$ and
\begin{align*}\bar{h}_{k}(\mathbf{X}) &\coloneqq {\min}^{k(\epsilon,r,S)}\left\{\bar{h}(\mathbf{X},\xi^j)~|~j\in[S]\right\}\cup\{\bar{h}^*(\mathbf{X)}\}. \end{align*}
%
\end{lemma}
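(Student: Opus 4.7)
The plan is to reduce the infinite-dimensional sup over $\mathcal{D}_r(\hat{\mathbb{P}}_S)$ to a finite-dimensional optimization over the probability simplex $\Delta^{S+1}$. Two structural facts drive the reduction. First, because $I(\hat{\mathbb{P}}_S, \mathbb{P}')$ involves only the masses that $\mathbb{P}'$ places on the $S$ sample points (and is $+\infty$ if any sample has zero mass), any $\mathbb{P}' \in \mathcal{D}_r(\hat{\mathbb{P}}_S)$ can be split as $\mathbb{P}' = \sum_{j\in[S]} p_j' \delta_{\xi^j} + (1 - \sum_j p_j') \mathbb{Q}$, where $\mathbb{Q}$ is supported on $\Xi$. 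Second, since the relative entropy is insensitive to where $\mathbb{Q}$ places its mass, moving that ``leftover'' mass to a point $\xi^{\star} \in \arg\max_{\xi\in\Xi} \bar{h}(\mathbf{X},\xi)$ preserves membership in $\mathcal{D}_r(\hat{\mathbb{P}}_S)$ while (weakly) increasing the $(1-\epsilon)$-quantile of $\bar{h}(\mathbf{X},\cdot)$. Existence of $\xi^\star$ follows from continuity of $\bar{h}(\mathbf{X},\cdot)$ and compactness of $\Xi$ under Assumption \ref{asp:dro}.

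Having reduced to distributions of the form $\sum_{j\in[S]} p_j' \delta_{\xi^j} + p'_{S+1}\delta_{\xi^{\star}}$, I would relabel indices so that $p_j$ corresponds to the $j$-th smallest value in the sorted multiset $\{\bar{h}(\mathbf{X},\xi^j)\}_{j\in[S]} \cup \{\bar{h}^{*}(\mathbf{X})\}$, with $p_{S+1}$ the mass at $\bar{h}^{*}(\mathbf{X})$. Under this relabeling, $I(\hat{\mathbb{P}}_S,\mathbb{P}')=-\tfrac{1}{S}\sum_{j\in[S]}\log(S p_j)$ and the cumulative distribution jumps from $\sum_{i\leq j-1} p_i$ to $\sum_{i\leq j} p_i$ at the $j$-th sorted value. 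The $(1-\epsilon)$-quantile therefore equals $\bar{h}_k(\mathbf{X})$, with $k$ being the largest index for which $\sum_{i\in[k]} p_i \leq 1 - \epsilon$ is compatible with the feasibility constraints. Maximizing over $\mathbb{P}'$ is then equivalent to maximizing this $k$ over all $\mathbf{p}\in\Delta^{S+1}$ satisfying the KL bound, which is exactly the combinatorial-concave program stated in the lemma.

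For each candidate $k$, feasibility reduces to asking whether the set $\{\mathbf{p}\in\Delta^{S+1}:\sum_{i\in[k]} p_i \leq 1-\epsilon,\ -\sum_{j\in[S]}\log(Sp_j)\leq Sr\}$ is nonempty. By symmetry/AM-GM applied to the concave log-sum and by pushing as much mass as possible above the $k$-th position, the minimum of $\sum_{i\in[k]}p_i$ subject to the KL constraint is attained at a $p$ that is constant on $\{1,\dots,k\}$ and constant on $\{k+1,\dots,S\}$. This standard Lagrangian/water-filling argument yields a closed-form feasibility check, and monotonicity in $k$ guarantees a well-defined maximizer $k(\epsilon,r,S) \in [S+1]$. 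Composing the structural reduction with this finite optimization gives the claimed identity $\hat{q}_{1-\epsilon,r,\hat{\mathbb{P}}_S}(\mathbf{X}) = \bar{h}_{k(\epsilon,r,S)}(\mathbf{X})$.

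The main obstacle is the first structural reduction: justifying that the worst-case leftover mass can be consolidated at a single point $\xi^{\star}$ without loss of generality, while being careful about the quantile's definition via a supremum (so that the sup over $\mathcal{D}_r$ may fail to be attained at a probability measure and must be realized in a limiting sense). Handling this cleanly requires invoking compactness of $\Xi$, continuity of $\bar{h}(\mathbf{X},\cdot)$, and a right-continuity/limit argument for the CDF and quantile under a weak-* convergent sequence of candidate distributions. Once this reduction is rigorous, the remainder of the argument is a routine convex-analytic calculation on the simplex.
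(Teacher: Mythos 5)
Your proposal is correct and follows essentially the same route as the paper's proof: restrict the supremum to distributions supported on the $S$ samples plus a maximizer of $\bar{h}(\mathbf{X},\cdot)$ (using that $I(\hat{\mathbb{P}}_S,\cdot)$ depends only on the masses placed on the sample points and that consolidating the leftover mass at the argmax stochastically enlarges the pushforward, hence weakly increases the quantile), and then reduce to the finite simplex program defining $k(\epsilon,r,S)$, which is independent of $\mathbf{X}$. The only difference is presentational: the paper delegates the measure-theoretic reduction and the simplex program to Lemma 2 and problem (33) of \cite{van_parys_data_2021}, whereas you work them out directly, including a water-filling feasibility check that the paper uses only implicitly via its closed-form expression for the maximal radius $r$.
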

%
\begin{proof}
We first show that in the definition of predictor $\hat{q}_{1 - \epsilon, r, \hat{\mathbb{P}}_S}$, the supremum can be restricted to the set of distributions in $\mathcal{D}_r (\hat{\mathbb{P}}_S)$ that are absolutely continuous with respect to $\hat{\mathbb{P}}_S$ except on the set 
\[ \Xi^*(\mathbf{X}) := \left\{\xi~|~\bar{h}_{\mathbf{X}}(\xi) = h^*_{\mathbf{X}} \right\}. \]
The proof is the same as that of Lemma 2 of \cite{van_parys_data_2021} except the bound on the expectation, i.e., the second last inequality in the proof. To deal with this issue, we only need to prove that for all $\mathbf{X}\in\R^d$, $p\in[0,1]$, $\xi^*\in\Xi^*(\mathbf{X})$, and $\mathbb{P}_c,\mathbb{P}_\perp\in\mathcal{P}$ such that $\mathbb{P}_c \ll \hat{\mathbb{P}}_S$ and $\mathbb{P}_\perp \perp \mathbb{P}_c$\footnote{For distributions $\mathbb{P},\mathbb{P}'\in\mathcal{P}$, we use $\mathbb{P} \ll \mathbb{P}'$ and $\mathbb{P} \perp \mathbb{P}'$ to denote the case when $\mathbb{P}$ is absolutely continuous and singular with respect to $\mathbb{P}'$, respectively.}, it holds that
\begin{align}
\label{eqn:inequ-quantile}
    q_{1 - \epsilon}\left(\bar{h}_{\mathbf{X}}, \mathbb{P}'\right) \geq q_{1 - \epsilon}\left(\bar{h}_{\mathbf{X}}, \mathbb{P}''\right),
\end{align}
where 
\[ \mathbb{P}' \coloneqq p \cdot \mathbb{P}_c + (1 - p) \cdot \delta_{\xi^*},\quad \mathbb{P}'' \coloneqq p \cdot \mathbb{P}_c + (1 - p) \cdot \mathbb{P}_\perp. \]
Let $F'(h)$ and $F''(h)$ be the cumulative distribution function of $\bar{h}_{\mathbf{X}}(\xi)$ under distribution $\mathbb{P}'$ and $\mathbb{P}''$, respectively. By the definition of quantile, 
to prove inequality (\ref{eqn:inequ-quantile}), it is sufficient to show that
\[ F'(h) \geq F''(h),\quad \forall h \in \R,
\]
which is equivalent to
\begin{align*} 
    &\mathbb{E}_{\xi\sim\mathbb{P}'}\left[\mathbf{1}(\bar{h}_{\mathbf{X}}(\xi) \leq h)\right] \geq \mathbb{E}_{\xi\sim\mathbb{P}''}\left[\mathbf{1}(\bar{h}_{\mathbf{X}}(\xi) \leq h)\right],\quad \forall h \in \R,
\end{align*}
where $\mathbf{1}(\gamma(\nu, \xi) \leq \gamma)$ is the indicator function. This can be proved in the same way as the proof in \cite{van_parys_data_2021}. As a result, there exists a distribution that attains $\hat{q}_{1 - \epsilon, r,\hat{\mathbb{P}}_S}(\mathbf{X})$ and has support in $\{\xi^j,j\in[S]\}\cup\Xi^*(\mathbf{X})$, which implies the existence of an integer $k\in[S + 1]$ such that
\[ \hat{q}_{1 - \epsilon, r, \hat{\mathbb{P}}_S}(\mathbf{X}) = \bar{h}_{k, \hat{\mathbb{P}}_S} \left( \mathbf{X} \right) . \]

Next, we prove that the integer $k$ does not depend on $\mathbf{X}$ and $\hat{\mathbb{P}}_S$. Let $\mathbb{P}_{\epsilon, r, \hat{\mathbb{P}}_S}$ be the aforementioned worst-case distribution that attains $\hat{q}_{1 - \epsilon, r, \hat{\mathbb{P}}_S}(\mathbf{X})$. Assume without loss of generality that
\[ \bar{h}_{\mathbf{X}}(\xi^1) \leq \cdots \leq \bar{h}_{\mathbf{X}}(\xi^S). \]
Define vector $\mathbf{p}\in\R^{S+1}$ as
\[ \mathbf{p}_j := \mathbb{P}_{\epsilon, r, \hat{\mathbb{P}}_S}(\xi^j),\quad \forall j\in[S],\quad \mathbf{p}_{S+1} := \mathbb{P}_{\epsilon, r, \hat{\mathbb{P}}_S}\left[\Xi^*(\mathbf{X})\right]. \]
Then, by problem (33) in \cite{van_parys_data_2021}, the integer $k$ is the solution to
\begin{align}
\label{eqn:quantile-k}
    \max_{k\in[S], \mathbf{p}\in\R^{S+1}}~ &k\\
    \nonumber\st~ &{\sum}_{j\in[k]} \mathbf{p}_j \leq 1 - \epsilon,~ \mathbf{1}_{S+1}^T \mathbf{p} = 1,~ \mathbf{p} \geq \mathbf{0}_{S + 1},\\
    \nonumber&-\frac1S {\sum}_{j\in[S]} \log(S \mathbf{p}_j) \leq r, 
\end{align}
which is independent of $\mathbf{X}$ and $\hat{\mathbb{P}}_S$. Intuitively, $k$ is the largest integer such that the probability $\mathbb{P}_{\epsilon, r, \hat{\mathbb{P}}_S}$ on the smallest $k$ samples is at most $1 - \epsilon$ and the relative entropy constraint is not violated.
\end{proof}
In the case when $k = S+1$, the evaluation of $\bar{h}_{S+1}(\mathbf{X})$ requires the knowledge of $h_{\mathbf{X}}^*$, which may be unknown in practice. Hence, we focus on the case when $k\in[S]$, which can be guaranteed by choosing suitable values of $\epsilon$ and $r$. Furthermore, the value of $k$ can be computed by solving the convex optimization problem (\ref{eqn:quantile-k}).

Then, we formally define the distributionally robust prescriptor, which is also defined in (\ref{eqn:drpres}).
\begin{definition}[Distributionally Robust Prescriptor]
For all $\epsilon\in(0,1]$ and $r > 0$, the distributionally robust prescriptor $\hat{\mathbf{X}}_{\epsilon, r, \mathbb{P}}$ is a quasi-continuous function of $\mathbb{P}$ that solves
\begin{align}\label{eqn:drpres-1}
    {\min}_{\mathbf{X} \in\R^d}~ g(\mathbf{X}) \quad \st~\hat{q}_{1 - \epsilon, r, \mathbb{P}}(\mathbf{X}) \leq 0.
\end{align}
%
\end{definition}
%
%
%
By Lemma \ref{lem:drpred}, the feasible set of problem (\ref{eqn:drpres-1}) is a subset of $\{ \mathbf{X}\in\R^d ~|~ \bar{h}_k(\mathbf{X}) \leq 0 \}$. Thus, combining with Assumption \ref{asp:dro}, problem (\ref{eqn:drpres}) has a finite optimal value and the distributionally robust prescriptor $\hat{\mathbf{X}}_{\epsilon, r,\hat{\mathbb{P}}_S}$ is well defined. 

\subsubsection{Big-M formulation of problem (\ref{eqn:drpres-mip})}

Now, we provide a mixed-integer reformulation of (\ref{eqn:drpres-mip}) to compute the distributionally robust prescriptor. Choosing $C > 0$ to be a sufficiently large constant, we show that the distributionally robust prescriptor is a solution to
\begin{align}
\label{eqn:drpres-mip-1}
    \min_{\mathbf{X}\in\R^d, \mathbf{b}\in\mathbb{Z}^S}~ &g(\mathbf{X})\\
    \nonumber\st~ &h_\ell(\mathbf{X}, \xi^j) \leq C \mathbf{b}_j,\quad \forall \ell\in[m], ~j \in [S],\\
    \nonumber&\mathbf{1}_S^T \mathbf{b} \leq S - k,\quad \mathbf{b}_j \in \{0, 1\},\quad \forall j \in [S].
\end{align}
Intuitively, the constraints in (\ref{eqn:drpres-mip-1}) enforce the joint chance constraint under the empirical distribution $\hat{\mathbb{P}}_S$. Namely, the constraint $h(\mathbf{X}, \xi^j) \leq \mathbf{0}_m$ is satisfied by at least $k$ samples. In the following theorems, we show that the chance constraint under the true distribution $\mathbb{P}_0$ can also be guaranteed by choosing $k$ to be slightly larger than $(1-\epsilon)S$.
\begin{theorem}
\label{thm:drpres-0}
The solution to (\ref{eqn:drpres-mip-1}) is a distributionally robust prescriptor.
\end{theorem}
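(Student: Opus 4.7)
The plan is to show that problem (\ref{eqn:drpres-mip-1}) is equivalent to problem (\ref{eqn:drpres-1}) defining the distributionally robust prescriptor, for $C$ chosen sufficiently large. First, I would invoke Lemma \ref{lem:drpred} to reduce the constraint $\hat{q}_{1-\epsilon, r, \hat{\mathbb{P}}_S}(\mathbf{X}) \leq 0$ to the finite-sample condition $\bar{h}_k(\mathbf{X}) \leq 0$ with $k = k(\epsilon, r, S) \in [S]$. Hence it suffices to argue that after projecting out the binary variable $\mathbf{b}$, the feasible set of (\ref{eqn:drpres-mip-1}) coincides with $\{\mathbf{X} \in \R^d : \bar{h}_k(\mathbf{X}) \leq 0\}$.

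The next step is a combinatorial characterization: $\bar{h}_k(\mathbf{X}) \leq 0$ holds if and only if there exists an index set $\mathcal{S} \subseteq [S]$ with $|\mathcal{S}| \geq k$ such that $h_\ell(\mathbf{X}, \xi^j) \leq 0$ for every $\ell \in [m]$ and $j \in \mathcal{S}$. Since $\bar{h}^*(\mathbf{X}) \geq \bar{h}(\mathbf{X}, \xi^j)$ for each $j \in [S]$ and $k \leq S$, the $k$-th smallest of the $S+1$ values $\{\bar{h}(\mathbf{X}, \xi^j)\}_{j \in [S]} \cup \{\bar{h}^*(\mathbf{X})\}$ is non-positive precisely when at least $k$ of the sample values $\bar{h}(\mathbf{X}, \xi^j)$ are non-positive (the edge case $\bar{h}^*(\mathbf{X}) \leq 0$ forces all sample values to be non-positive as well). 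This selection is encoded in (\ref{eqn:drpres-mip-1}) by the binary indicators $\mathbf{b}_j$, with $\mathbf{b}_j = 0$ corresponding to $j \in \mathcal{S}$: the cardinality constraint $\mathbf{1}_S^T \mathbf{b} \leq S - k$ ensures $|\mathcal{S}| \geq k$, and the big-M constraint $h_\ell(\mathbf{X}, \xi^j) \leq C \mathbf{b}_j$ enforces $h_\ell(\mathbf{X}, \xi^j) \leq 0$ whenever $\mathbf{b}_j = 0$.

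The main obstacle will be verifying that the big-M reformulation does not artificially restrict the feasible region, i.e., that for $C$ sufficiently large the constraint $h_\ell(\mathbf{X}, \xi^j) \leq C$ (when $\mathbf{b}_j = 1$) is not binding at any optimum. Using Assumption \ref{asp:dro}, I would pick a sample-robust $\mathbf{X}_0$ satisfying $h(\mathbf{X}_0, \xi^j) \leq \mathbf{0}_m$ for every $j \in [S]$; then $(\mathbf{X}_0, \mathbf{0}_S)$ is feasible for (\ref{eqn:drpres-mip-1}) independently of $C$, giving $g(\mathbf{X}_0)$ as an upper bound on the optimal value. Restricting attention to candidates $\mathbf{X}$ with $g(\mathbf{X}) \leq g(\mathbf{X}_0)$ on a compact sublevel set of $g$, the continuity of $h$ on the compact support $\Xi$ yields a uniform upper bound on $h_\ell(\mathbf{X}, \xi^j)$, and any $C$ exceeding this bound makes the big-M constraints inactive whenever $\mathbf{b}_j = 1$. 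Combining the three steps, every optimal solution of (\ref{eqn:drpres-mip-1}) projects to an optimal $\mathbf{X}$ of (\ref{eqn:drpres-1}), and vice versa, yielding the claim.
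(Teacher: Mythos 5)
Your proposal is correct and follows essentially the same route as the paper's proof: reduce the distributionally robust chance constraint to $\bar{h}_k(\mathbf{X})\leq 0$ via Lemma \ref{lem:drpred}, then observe that the binary variables together with the big-M constraints encode exactly the requirement that at least $k$ of the sampled constraint sets be satisfied. Your additional care in justifying the existence of a finite valid $C$ (via a sample-robust feasible point from Assumption \ref{asp:dro} and a bounded candidate set) goes slightly beyond the paper, which simply takes $C$ ``sufficiently large''; just note that the compactness of the sublevel set of $g$ you invoke is not literally guaranteed by Assumption \ref{asp:dro}.
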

\begin{proof}
The formulation (\ref{eqn:drpres-mip-1}) is based on the Big-M method \cite{wolsey_integer_1999}. If the variable $\mathbf{b}_j = 1$, since the constant $C$ is sufficiently large, there is no constraint on $h_\ell(\mathbf{X}, \xi^j)$. Otherwise if the variable $\mathbf{b}_j = 0$, the first constraint becomes
\[ h_\ell(\mathbf{X}, \xi^j) \leq 0,\quad \forall \ell\in[m], \]
which is equivalent to the condition $\bar{h}(\mathbf{X}, \xi^j) \leq 0$. With a given $\mathbf{X}\in\R^d$, the constraint $\mathbf{1}_S^T \mathbf{b} \leq S - k$ requires that the above condition hold for at least $k$ samples. To achieve the minimum over $\mathbf{X}$, the condition $\mathbf{b}_j = 0$ should hold for the $k$ indices that correspond to the $k$ smallest values in $\{\bar{h}_{\mathbf{X}}(\xi^j), j\in[S]\}$. In other words, the constraints in (\ref{eqn:drpres-mip-1}) are equivalent to
\[ \bar{h}_k(\mathbf{X}) \leq 0. \]
Combining with Lemma \ref{lem:drpred}, we get the desired result.
\end{proof}
In the case of the chance-constrained DC OPF problem in Section \ref{sec:dc-ccopf}, problem (\ref{eqn:drpres-mip-1}) becomes a MIP problem. On the other hand, for the chance-constrained AC OPF problem introduced in Section \ref{sec:ac-ccopf}, problem (\ref{eqn:drpres-mip-1}) is equivalent to a mixed-integer QCQP. In Section \ref{sec:relaxation}, we apply the convex relaxation to the QCQP and when the relaxation is exact, problem (\ref{eqn:drpres-mip-1}) is equivalent to a C-MINLP problem, which can be handled by off-the-shelf convex optimization solvers.

\subsubsection{Proof of Theorem \ref{thm:drpres-1}}

\begin{proof}[Proof of Theorem \ref{thm:drpres-1}]
By the definition of the prescriptor $\hat{\mathbf{X}}_{\epsilon, r, \hat{\mathbb{P}}_S}$, we have
\begin{align*}
    \hat{q}_{1 - \epsilon, r, \hat{\mathbb{P}}_S}\left(\hat{\mathbf{X}}_{\epsilon, r, \hat{\mathbb{P}}_S}\right) \leq 0.
\end{align*}
By a similar technique to the proof of Lemma \ref{lem:drpred}, the results of Theorem 10 of \cite{van_parys_data_2021} also holds for the predictor $\hat{q}_{1-\epsilon, r, \hat{\mathbb{P}}_S}$ and we have
\begin{align*}
    \limsup_{S\rightarrow\infty}\frac1S \log\mathbb{P}_\infty\Big[ &\hat{q}_{1-\epsilon,r,\hat{\mathbb{P}}_S}\left(\hat{\mathbf{X}}_{\epsilon, r, \hat{\mathbb{P}}_S}\right)\\
    &\hspace{2em}< q_{1 - \epsilon} \left[\bar{h}(\hat{\mathbf{X}}_{\epsilon, r, \hat{\mathbb{P}}_S},\cdot), \mathbb{P}_0\right] \Big] \leq -r.
\end{align*}
Combining the above two inequalities, we get
\begin{align*}
    \mathbb{P}_\infty\left[ q_{1 - \epsilon} \left[\bar{h}(\hat{\mathbf{X}}_{\epsilon, r, \hat{\mathbb{P}}_S},\cdot), \mathbb{P}_0\right] \leq 0 \right] \geq 1 - \exp\left[-rS + o(S)\right].
\end{align*}
By the definition of the quantile and applying the union bound, it follows that
\begin{align*}
    \mathbb{P}_\infty\Big[ h_\ell\left(\hat{\mathbf{X}}_{\epsilon, r, \hat{\mathbb{P}}_S}, \xi\right) \leq 0,\quad &\forall \ell \in [m] \Big]\\
    &\geq 1 - \epsilon - \exp\left[ -rS + o(S) \right].
\end{align*}
which is the desired result of this theorem.
\end{proof}

\subsubsection{Proof of Theorem \ref{thm:drpres-2}}

\begin{proof}[Proof of Theorem \ref{thm:drpres-2}]
We first construct a set where the distributionally robust predictor $\hat{q}_{1-\epsilon, r, S}$ takes a positive value. Assume conversely that 
\[ {p}_S \coloneqq \mathbb{P}_\infty\left[ g\left(\tilde{\mathbf{X}}_{\epsilon, r, \hat{\mathbb{P}}_S}\right) < g \left(\hat{\mathbf{X}}_{\epsilon, r, \hat{\mathbb{P}}_S} \right) \right] > 0. \]
%
Since the prescriptor $\hat{\mathbf{X}}_{1-\epsilon, r, S}$ attains the minimal objective value under the constraint $\bar{h}_{k}( \mathbf{X} ) \leq 0$, we have
\[  \mathbb{P}_\infty\left[ \bar{h}_{k}\left( \tilde{\mathbf{X}}_{\epsilon, r, \hat{\mathbb{P}}_S} \right) > 0 \right] \geq p_S. \]
Since $\mathbb{P}_\infty(\mathbf{b} > z)$ is a right-continuous function of $z\in\R$ for every random variable $\mathbf{b}$, there exists a sufficiently small constant $\tau > 0$ such that
\[  \mathbb{P}_\infty\left[ \bar{h}_{k}\left( \tilde{\mathbf{X}}_{\epsilon, r, \hat{\mathbb{P}}_S} \right) > \tau \right] \geq p_S / 2 > 0. \]
Consider the set
\begin{align*}
    \mathcal{X}_S \coloneqq \Big\{ \left( \tilde{\mathbf{X}}_{\epsilon, r, \hat{\mathbb{P}}_S}, \hat{\mathbb{P}}_S \right)  ~|~ \bar{h}_{k}\left( \tilde{\mathbf{X}}_{\epsilon, r, \hat{\mathbb{P}}_S} \right) > \tau \Big\} \subset \R^d \times \mathcal{P}.
\end{align*}
Since $\tilde{\mathbf{X}}_{\epsilon, r, \hat{\mathbb{P}}_S}$ is a quasi-continuous function of the empirical distribution $\hat{\mathbb{P}}_S$, the set $\mathcal{X}_S$ is a non-empty quasi-open set \cite[Prop. 1.2.4]{neubrunn_quasi-continuity_1988} under the product topology of the Euclidean topology on $\R^d$ and the weak topology on $\mathcal{P}$. Therefore, the interior of $\mathcal{X}_S$, denoted as $\mathcal{X}_S^\circ$, is non-empty.

Now, we construct a data-driven predictor $\tilde{q}_{1-\epsilon, r, \mathbb{P}}$ that is continuous and does not dominate the distributionally robust predictor $\hat{q}_{1-\epsilon, r, \mathbb{P}}$. For every point $(\mathbf{X}, \mathbb{P}) \in \mathcal{X}_S$, we define
\[ \mathrm{d}(\mathbf{X}, \mathbb{P}) \coloneqq \min\left\{ \mathrm{dist}\left[ (\mathbf{X}, \mathbb{P}), \mathcal{X}_S^c \right] , \tau \right\}, \]
where $\mathcal{X}_S^c \coloneqq (\R^d \times \mathcal{P}) \backslash \mathcal{X}_S$ is the complementary set of $\mathcal{X}_S$ and the distance function is induced by the Euclidean $2$-norm on $\R^d$ and the Prokhorov metric \cite{prokhorov_convergence_1956} on $\mathcal{P}$. Since the distance function is continuous, the function $\mathrm{d}(\cdot,\cdot)$ is also continuous and takes positive values on $\mathcal{X}_S^\circ$. We define
\begin{align*}
    \tilde{q}_{1-\epsilon, r, \mathbb{P}}(\mathbf{X}) \coloneqq 
    \hat{q}_{1-\epsilon, r, \mathbb{P}}(\mathbf{X}) - \mathrm{d}(\mathbf{X}, \mathbb{P}),\quad \forall (\mathbf{X}, \mathbb{P}) \in \R^d \times \mathcal{P}.
\end{align*}
It follows from the definition of $\mathrm{d}$ and $\mathcal{X}_S$ that
\begin{align}
\label{eqn:drpres-proof-4}
    0 \leq \tilde{q}_{1-\epsilon, r, \mathbb{P}}(\mathbf{X}) \leq \hat{q}_{1-\epsilon, r, \mathbb{P}}(\mathbf{X}) ,\quad &\forall (\mathbf{X}, \mathbb{P}) \in \mathcal{X}_S,
\end{align}
where the second inequality holds strictly on $\mathcal{X}_S^\circ$. Note that the predictor $\tilde{q}_{1-\epsilon, r, \hat{\mathbb{P}}_S}$ is a data-driven predictor since it only relies on the empirical distribution $\hat{\mathbb{P}}_S$.  

Finally, we show that $\tilde{q}_{1-\epsilon, r, \mathbb{P}}$ is feasible for problem (5) in \cite{van_parys_data_2021}, namely,
\begin{align}
\label{eqn:drpres-proof-3}
    \limsup_{S\rightarrow \infty} \frac1S \log\mathbb{P}_\infty\left[ \tilde{q}_{1-\epsilon, r, \hat{\mathbb{P}}_S}(\mathbf{X}) < q_{1-\epsilon}\left[\bar{h}(\mathbf{X},\cdot), \mathbb{P}_0\right] \right] \leq -r.
\end{align}
Since condition (\ref{eqn:drpres-proof-3}) is satisfied by $\hat{q}_{1-\epsilon, r, \hat{\mathbb{P}}_S}$ and 
\[ \tilde{q}_{1-\epsilon, r, \hat{\mathbb{P}}_S}(\mathbf{X}) = \hat{q}_{1-\epsilon, r, \hat{\mathbb{P}}_S}(\mathbf{X}),\quad \forall \mathbf{X} \in\R^d ~\st ~\mathbf{X} \neq \tilde{\mathbf{X}}_{\epsilon, r, \hat{\mathbb{P}}_S}, \]
we only need to show
\begin{align}
\label{eqn:drpres-proof-5}
    \limsup_{S\rightarrow\infty}\frac1S \log\mathbb{P}_\infty\Big[ &\tilde{q}_{1-\epsilon,r,\hat{\mathbb{P}}_S}\left(\hat{\mathbf{X}}_{\epsilon, r, \hat{\mathbb{P}}_S}\right)\\
    \nonumber&\hspace{2em}< q_{1 - \epsilon} \left[\bar{h}(\tilde{\mathbf{X}}_{\epsilon, r, \hat{\mathbb{P}}_S},\cdot), \mathbb{P}_0\right] \Big] \leq -r.
\end{align}
Since prescriptor $\tilde{\mathbf{X}}_{\epsilon, r, \hat{\mathbb{P}}_S}$ satisfies condition (\ref{eqn:drpres-bound-1}), it holds that
\begin{align*}
    \limsup_{S\rightarrow\infty}\frac1S \log\mathbb{P}_\infty\Big[ q_{1 - \epsilon} \left[\bar{h}(\tilde{\mathbf{X}}_{\epsilon, r, \hat{\mathbb{P}}_S},\cdot), \mathbb{P}_0\right] < 0 \Big] \leq -r.
\end{align*}
Combining with property (\ref{eqn:drpres-proof-4}), we get the desired result (\ref{eqn:drpres-proof-5}). 

In summary, we have constructed a predictor $\tilde{q}_{1-\epsilon,r,\hat{\mathbb{P}}_S}\left(\hat{\mathbf{X}}_{\epsilon, r, \mathbb{P}}\right)$ that is continuous and feasible for problem (5) in \cite{van_parys_data_2021}, but it does not dominate the distributionally robust predictor $\hat{q}_{1-\epsilon,r,\hat{\mathbb{P}}_S}\left(\hat{\mathbf{X}}_{\epsilon, r, \mathbb{P}}\right)$. However, this is in contradiction with Theorem 10 in \cite{van_parys_data_2021}, which claims that the distributionally robust predictor is the strong solution to problem (5).
\end{proof}

\end{appendix}

\end{document}